\documentclass[12pt]{amsart}
\usepackage{amsfonts}
\usepackage{amsmath}
\usepackage{amssymb,latexsym}
\usepackage{verbatim}
\usepackage[usenames,dvipsnames]{xcolor}
\usepackage{enumitem}
\usepackage{wasysym}
\usepackage[all]{xy}
\usepackage{graphicx}
\usepackage{longtable}
\usepackage[left=2.41cm,top=3.5cm,bottom=4cm,right=2.41cm,a4paper]{geometry}
\usepackage[linkcolor=linkblue, citecolor=citegreen, ocgcolorlinks, bookmarksopen=True]{hyperref}
\usepackage{amsthm}
\usepackage[capitalize, noabbrev]{cleveref}

\usepackage{mathtools}
\usepackage{tikz}
\usepackage{comment}
\setcounter{MaxMatrixCols}{10}

\newenvironment{customthm}[1]
  {\innercustomthm}
  {\endinnercustomthm}

\def\dosubref#1#2:#3\relax{#1{#2}\label{#2:#3}}
\definecolor{linkblue}{RGB}{1,1,190}
\definecolor{citegreen}{RGB}{1,190,1}
\theoremstyle{plain}
\newtheorem{theorem}{Theorem}[section]
\newtheorem{corollary}[theorem]{Corollary}
\newtheorem{lemma}[theorem]{Lemma}
\newtheorem{proposition}[theorem]{Proposition}

\theoremstyle{definition}

\newtheorem{example}[theorem]{Example}

\numberwithin{equation}{section} 
\setlist[enumerate,1]{label=\textup{(\arabic*)},ref=\textup{(\arabic*)},leftmargin=2.5em}
\newlist{enumerateequiv}{enumerate}{1}  
\setlist[enumerateequiv,1]{label=\textup{(\alph*)},ref=\textup{(\alph*)},leftmargin=2.5em}

\DeclareMathOperator{\ann}{\rm ann}
\DeclareMathOperator{\diam}{\rm diam}

\begin{document}
\title{Domination and Total Domination Numbers in Zero-divisor Graphs of Commutative Rings}
\author[S.~Anderson]{Sarah Anderson}
\author[M.~Axtell]{Mike Axtell}
\author[B.~Kroschel]{Brenda Kroschel}
\author[J.~Stickles]{Joe Stickles}
\address{Department of Mathematics \\
St. Thomas University \\
St.~Paul, MN 55105}
\email{ande1298@stthomas.edu}
\address{Department of Mathematics \\
St. Thomas University \\
St.~Paul, MN 55105}
\email{mike.axtell@stthomas.edu}
\address{Department of Mathematics \\
St. Thomas University \\
St.~Paul, MN 55105}
\email{bkkroschel@stthomas.edu}
\address{School of Mathematics and Computational Sciences \\ Millikin University \\ Decatur, IL 62522}
\email{jstickles@millikin.edu}
\date{\today}
\keywords{dominating set, domination number, total domination, zero-divisor graph, commutative ring}
\begin{abstract}
Zero-divisor graphs of commutative rings are well-represented in the literature. In this paper, we consider dominating sets, total dominating sets, domination numbers and total domination numbers of zero-divisor graphs. We determine the domination and total domination numbers of zero-divisor graphs are equal for all zero-divisor graphs of commutative rings except for $\mathbb{Z}_2 \times D$ in which $D$ is a domain. In this case, $\gamma(\Gamma(\mathbb{Z}_2 \times D)) = 1$ and $\gamma_t(\Gamma(\mathbb{Z}_2 \times D)) = 2$.
\end{abstract}

\maketitle


\section{Introduction}
The concept of the graph of the zero-divisors of a commutative ring
was first introduced by Beck in \cite{BEC} when discussing the
coloring of a commutative ring. In his work all elements of the ring
were considered vertices of the graph. Since the seminal paper by
D.F. Anderson and Livingston \cite{AL1}, the standard is to regard
only nonzero zero-divisors, denoted $Z(R)^*$, as vertices of the graph, and we adhere
to this standard. The \emph{%
zero-divisor graph} of $R$, denoted $\Gamma (R),$ is the graph with $%
V(\Gamma (R))=Z(R)^{\ast },$ and for distinct $r,s\in Z(R)^{\ast }$, $r-s \in E(\Gamma(R))$ if and only if $rs=0$. Among other results, Anderson and Livingston proved that $\Gamma(R)$ is always connected and has diameter at most three (\cite[Theorem 2.3]{AL1}).  The discovery of strong graphical structure in zero-divisor graphs has inspired researchers to continue exploring how the graphical structure of the zero-divisor graph might reveal information about the algebraic structure in the ring, a desire necessitated by the frequent lack of closure under addition in the set of zero-divisors of a ring. For general surveys of $\Gamma(R)$, see \cite{AAS} and \cite{CWSS}. 

The main focus of this paper concerns the domination and total domination numbers of zero-divisor graphs. Our main result is the following theorem.

\begin{theorem}
\label{mainthm}
Let $R$ be a commutative ring. If $R$ is not isomorphic to $\mathbb{Z}_2 \times D$ where $D$ a domain, then $\gamma_t ( \Gamma (R))=\gamma( \Gamma (R))$.
\end{theorem}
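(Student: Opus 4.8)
The plan is to translate the claim into a statement about minimum dominating sets and then run an exchange argument powered by the annihilator structure of $R$. First I would dispose of degenerate cases: if $R$ is a domain then $\Gamma(R)$ has no vertices and $\gamma(\Gamma(R))=\gamma_t(\Gamma(R))=0$, and if $|Z(R)^*|=1$ there is nothing to prove, so assume $\Gamma(R)$ has an edge. Since $\Gamma(R)$ is connected by \cite[Theorem~2.3]{AL1}, it has no isolated vertices, so every total dominating set is a dominating set and $\gamma(\Gamma(R))\le\gamma_t(\Gamma(R))$ comes for free. The content is the reverse inequality, and the key reformulation is: $\gamma_t(\Gamma(R))=\gamma(\Gamma(R))$ if and only if $\Gamma(R)$ has a minimum dominating set $S$ such that the induced subgraph $\Gamma(R)[S]$ has no isolated vertex, since any such $S$ is already a total dominating set. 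So the goal becomes, starting from an arbitrary minimum dominating set, to produce one of the same size in which every vertex has a neighbour inside the set, and to show this is always possible unless $R\cong\mathbb{Z}_2\times D$.

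Next I would set up the exchange. Let $D$ be a minimum dominating set and suppose $v\in D$ is isolated in $\Gamma(R)[D]$; then $v$ has a neighbour in $\Gamma(R)$, because $v\in Z(R)^*$ forces $\ann(v)\ne 0$, but no neighbour of $v$ lies in $D$. Call a vertex $u\notin D$ a private neighbour of $v$ if $v$ is the unique vertex of $D$ in the closed neighbourhood $N[u]$. If $v$ has no private neighbour, then $D\setminus\{v\}$ dominates every vertex except $v$, and in particular every neighbour of $v$ is adjacent to some vertex of $D\setminus\{v\}$; choosing such a neighbour $w$ of $v$ and replacing $v$ by $w$ yields a minimum dominating set $D'=(D\setminus\{v\})\cup\{w\}$ in which $w$ and its partner in $D\setminus\{v\}$ are no longer isolated, no fresh isolated vertex has appeared, and the number of vertices isolated in the induced subgraph has strictly dropped. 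Iterating, we reduce to the case of a minimum dominating set in which every isolated vertex has a private neighbour.

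Handling that configuration is the crux. Given $v\in D$ isolated with a private neighbour, one again wants to swap $v$ for a neighbour, but now the replacement must re-dominate the private neighbours of $v$. Here I would choose a neighbour $w$ of $v$ with $\ann(w)$ as large as possible --- e.g.\ a maximal element of $\{\ann(x):0\ne x\in\ann(v)\}$, which is a prime ideal (for non-Noetherian $R$ one makes an ad hoc maximal choice, using $\diam\Gamma(R)\le 3$ from \cite[Theorem~2.3]{AL1}) --- and argue that $w$ then absorbs exactly the private neighbours of $v$ that still need domination; if the swap nonetheless fails, it can only be because $w$ is again isolated with its own private neighbours, and one follows the resulting strictly ascending chain of annihilator ideals to a conclusion. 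Run to the end, this either terminates with a minimum dominating set of the desired kind, giving $\gamma_t(\Gamma(R))=\gamma(\Gamma(R))$, or else certifies that $\gamma(\Gamma(R))=1$, i.e.\ that $\Gamma(R)$ has a vertex $x$ adjacent to all others, whence $\gamma_t(\Gamma(R))=2$. In this last situation I would finish algebraically: from $Z(R)^*\setminus\{x\}\subseteq\ann(x)$, and splitting on whether $x^2=0$ or $x^2\ne 0$, one is driven to the decomposition $R\cong xR\times\ann(x)$ with $xR\cong\mathbb{Z}_2$ and $\ann(x)$ a domain, that is, $R\cong\mathbb{Z}_2\times D$.

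The step I expect to be the real obstacle is precisely this last dichotomy together with making the exchange argument airtight: tracking the private neighbours through a swap is exactly where one must invoke the structure of $R$ (primeness of maximal annihilators, the diameter bound, the behaviour of idempotents), and it is also where the single exceptional family $\mathbb{Z}_2\times D$ has to be isolated --- showing that a ring admitting no valid swap must be of this form, rather than being some other ring with a universal vertex, is the delicate point.
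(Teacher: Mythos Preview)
Your reformulation --- find a minimum dominating set whose induced subgraph has no isolated vertex --- is exactly the paper's strategy, and your first exchange (the ``no private neighbour'' case) is fine. The problem is the second exchange, which is where all the work lies.

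When $v\in D$ is isolated in $\Gamma(R)[D]$ and has a private neighbour $u$, swapping $v$ for some $w\in\ann(v)^*$ with $\ann(w)$ maximal does \emph{not} in general yield a dominating set: you need $uw=0$ for every private neighbour $u$ of $v$, and maximality of $\ann(w)$ among $\{\ann(x):x\in\ann(v)^*\}$ gives no such containment. So the failure mode is not ``$w$ is again isolated'' but rather ``$D'$ is not dominating at all,'' and the ascending-chain idea never gets started. The paper does not try to make a single clever swap; instead it fixes a minimum dominating set $D$ maximizing $|\{z\in D:\ann(z)\cap D\ne\emptyset\}|$, simultaneously picks a minimum \emph{total} dominating set $T$, and uses the key structural fact (its Lemma~\ref{PropC}) that $T$ induces a complete subgraph of $\Gamma(R)$. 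It then brings elements of $T$ into play and exploits ring operations you never invoke --- products like $a_ia_j$ and especially differences like $x-d$ with $x\in D$, $d\in T$ --- to manufacture replacement sets $\tilde D$ that contradict the extremal choice of $D$. That interplay between $D$, $T$, and the additive structure of $R$ is the missing engine in your sketch.

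Two smaller points. First, you never treat the case $\gamma(\Gamma(R))$ infinite; an exchange-one-vertex-at-a-time procedure has no obvious termination there, and the paper handles this separately by a cardinality argument (Lemma~\ref{infinite}). Second, your endgame ``$R\cong xR\times\ann(x)$ with $xR\cong\mathbb{Z}_2$'' is not a valid decomposition from the mere hypothesis that $x$ is universal with $x^2\ne 0$; you are tacitly assuming an idempotent you have not produced. The paper simply quotes \cite[Theorem~2.5]{AL1}: a universal vertex forces either $R\cong\mathbb{Z}_2\times D$ or $Z(R)=\ann(a)$, and in the latter case $\{a\}$ is already a total dominating set, so $\gamma_t=1$.
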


In Section \ref{def}, an overview of definitions and known results is given. In Section \ref{girth4}, commutative rings whose zero-divisor graphs have a total domination number of $1$ or $2$ are investigated and connections to the girth of the zero-divisor graph are provided. In Section \ref{finitedominationnumber}, it is proven that if $R$ is not isomorphic to $\mathbb{Z}_2 \times D$ where $D$ is a domain and the total domination number of a zero-divisor graph of $R$ is finite, then the domination and the total domination numbers of the zero-divisor graph are equal. Finally, in Section \ref{mainresult}, Theorem \ref{mainthm} is proven.

\section{Definitions}
\label{def}
Throughout, by a \emph{ring} we mean a commutative ring, typically denoted by $R$, with a multiplicative identity 1. We use $Z(R)$ to denote the set of zero-divisors of $R$
and $Z(R)^{\ast }$ to denote the set of nonzero zero-divisors.  For the set of integers modulo $n$, we use the notation $\mathbb{Z}_n$.  There are two ideals of $R$ that will be of particular interest.  For $a \in R$, the \emph{annihilator} of $a$ is $\ann(a) = \{x \in R \mid ax = 0\}$.  The \emph{Jacobson radical} of a ring $R$, denoted $J(R)$, is the intersection of the maximal ideals of $R$.  A ring $R$ is said to be \emph{Artinian} if it satisfies the descending chain condition; i.e., there is no infinite descending sequence of ideals in $R$. It is well known that the Jacobson radical of an Artinian ring is nilpotent, that every element of an Artinian ring is either a unit or a zero-divisor, and that any Artinian ring is a direct product of fields and local rings.  In particular, all finite rings are Artinian.  A ring $R$ is said to be local if it is Noetherian with a unique maximal ideal.  It is also well known that Artinian rings are always Noetherian.  For a general algebra reference, see \cite{HUN}.

For any graph $G$, we denote the set of vertices of $G$ by $V(G)$ and the set of edges by $E(G)$.  We write $v - w$ when vertices $v$ and $w$ are \emph{adjacent}, or are incident on the same edge. If $v \in V(G)$ is such that $v - w \in E(G)$ for all $w \in V(G) \setminus \{v\}$, then $v$ is called a \emph{universal vertex}. By a \emph{path} between $v$ and $w$, we mean a sequence of vertices and edges $v - x_1 - x_2 - \cdots - x_n - w$, and  $G$ is \emph{connected} if there exists a path between any two distinct vertices.

The \emph{distance} between $v$ and $w$, denoted by $d(v,w)$, is the number of edges in a shortest path connecting 
$v$ and $w$ (note that $d(v,v) = 0$ and $d(v,w) = \infty$ if no such path exists). The 
\emph{diameter} of $G$ is $\diam(G) = \sup \{ d(v,w) \mid v,w \in V(G) \}$. In \cite[Theorem 2.3]{AL1}, it was shown that for any commutative ring $R$, $\Gamma(R)$ is connected with diameter $3$ or less. The \emph{girth} of a graph is the length of a shortest cycle contained in the graph. By \cite[Theorem 3]{AAS}, the girth of of a zero-divisor graph is $3$, $4$, or $\infty$. For a general graph theory reference,
see \cite{CHA}. 

For a graph $G$, a set $X \subseteq V(G)$ is a \emph{dominating set} of $G$ if for every $v \in V(G) \setminus X$	there exists $x \in X$ such that $x - v \in E(G)$.  The \emph{domination number} of $G$, denoted $\gamma(G)$, is $\gamma(G) = \min\{|X| \mid X  \text{ is a dominating set of } G\}$. Also for a graph $G$, we say $ X \subseteq V(G)$ is a \emph{total dominating set} of $G$ if for all $v \in V(G)$ there exists $x \in X$ such that $x - v$. We then define the \emph{total domination number} of $G$, denoted $\gamma_t ( G)$, to be the size of the minimum total dominating set.  Note that the difference between these two notions is that for a dominating set, we only require that every vertex not in the dominating set to be adjacent to a vertex in the dominating set, while for a total dominating set, even those vertices in the total dominating set must be adjacent to some vertex in the total dominating set.

Clearly, any total dominating set is also a dominating set, hence $\gamma ( \Gamma (R)) \leq \gamma_t ( \Gamma (R))$. It is not the case in general that dominating sets are total dominating sets. In fact, a minimum dominating set might not even be a total dominating set.  Consider $P_4$, the path graph on $4$ vertices. In Figure \ref{fig29}, the shaded dots in the left graph comprise a minimum dominating set of $P_4$, while the shaded dots in the right graph are a (minimum) total dominating set of $P_4$.

\begin{center}
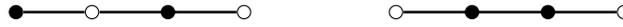
\begin{figure}[h]
\begin{tikzpicture}
\draw [line width=1.pt] (0.1,0)-- (0.9,0);
\draw [line width=1.pt] (1.1,0)-- (2,0);
\draw [line width=1.pt] (2,0)-- (2.9,0);
\draw [line width=1.pt] (5.1,0)-- (6,0);
\draw [line width=1.pt] (7,0)-- (6,0);
\draw [line width=1.pt] (7,0)-- (7.9,0);
\draw[fill=black] (0,0) circle (2.5pt);
\draw (1,0) circle (2.5pt);
\draw[fill=black] (2,0) circle (2.5pt);
\draw (3,0) circle (2.5pt);

\draw (5,0) circle (2.5pt);
\draw[fill=black] (6,0) circle (2.5pt);
\draw[fill=black] (7,0) circle (2.5pt);
\draw (8,0) circle (2.5pt);

\end{tikzpicture}
\caption{Minimum versus total dominating set in $P_4$}\label{fig29}

\end{figure}
\end{center}
\begin{example}
In Figure \ref{fig1}, we see that $\{(1,0),(0,1)\}$ is both a minimum dominating and a minimum total dominating set of $\Gamma(\mathbb{Z}_3 \times \mathbb{Z}_3)$, so $\gamma(\Gamma(\mathbb{Z}_3 \times \mathbb{Z}_3)) = \gamma_t(\Gamma(\mathbb{Z}_3 \times \mathbb{Z}_3)) = 2$.  In Figure \ref{fig2}, we see that $\{3\}$ is a minimum dominating set of $\Gamma(\mathbb{Z}_6)$, so $\gamma(\Gamma(\mathbb{Z}_6)=1$. However, there does not exist a total dominating set of size $1$, so $\gamma_t(\Gamma(\mathbb{Z}_6))=2$. The set $\{2,3\}$ is a total dominating set of $\Gamma(\mathbb{Z}_6)$.
\end{example}

\begin{center}
\begin{figure}[h]
\begin{tikzpicture}
\draw [line width=1pt] (0,0)-- (2,0);
\draw [line width=1pt] (0,0)-- (0,-2);
\draw [line width=1pt] (2,-2)-- (2,0);
\draw [line width=1pt] (0,-2)-- (2,-2);
\draw[fill=black] (0,0) circle (2.5pt);
\draw (-0.1,0.5) node {$(1,0)$};
\draw[fill=black]  (2,0) circle (2.5pt);
\draw(1.95,0.5) node {$(0,1)$};
\draw[fill=white](0,-2) circle (2.5pt);
\draw (-0.1,-2.5) node {$(0,2)$};
\draw[fill=white](2,-2) circle (2.5pt);
\draw (1.95,-2.5) node {$(2,0)$};
\end{tikzpicture}
\caption{$\Gamma(\mathbb{Z}_3 \times \mathbb{Z}_3)$} \label{fig1}
\end{figure}
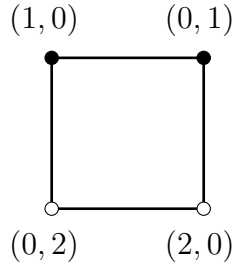
\end{center}

\begin{center}
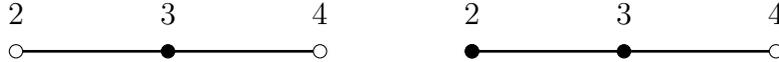
\begin{figure}[h]
\begin{tikzpicture}
\draw [line width=1pt] (0,0)-- (2,0);
\draw [line width=1pt] (4,0)-- (2,0);

\draw[fill=white] (0,0) circle (2.5pt);
\draw (0,0.5) node {$2$};
\draw[fill=black] (2,0) circle (2.5pt);
\draw (4,0.5) node {$4$};
\draw[fill=white] (4,0) circle (2.5pt);

\draw (2,0.5) node {$3$};

\draw [line width=1.pt] (6,0)-- (8,0);
\draw [line width=1.pt] (8,0)-- (10,0);

\draw (6,0.5) node {$2$};

\draw (6,0) circle (2.5pt);
\draw[fill=black](6,0) circle (2.5pt);
\draw (8,0.5) node {$3$};
\draw[fill=black] (8,0) circle (2.5pt);
\draw (10,0.5) node {$4$};
\draw[fill=white] (10,0) circle (2.5pt);

\end{tikzpicture}  

\caption{Dominating set versus total dominating set of $\Gamma (\mathbb{Z}_6)$.}\label{fig2}
\end{figure}
\end{center}

If $G$ is a simple graph (an undirected graph with no repeated edges and no looped vertices), then the definition of total dominating set requires that we have $\gamma_t(G) \geq 2$.  However, for zero-divisor graphs, we will have instances when a vertex, as an element of the ring, squares to 0.  Therefore, it will be beneficial to modify our zero-divisor graph definition to allow for self-adjacency.  If $x \in V(\Gamma(R))$, then we say $x$ is \emph{self-adjacent} (or is a \emph{neighbor to itself} or is \emph{looped}) if and only if $x^2 = 0$. Self-adjacency allows for the possibility of the total domination number of a graph equaling 1.

\begin{example}
\label{exz8}
In $\Gamma(\mathbb{Z}_8)$, the vertex set is $Z(R)^* = \{2,4,6\}$.  We see that the vertex $4$ is self-adjacent since $4^2 = 0$ in $\mathbb{Z}_8$. (We denote this self-adjacency with a loop on the vertex $4$.  See Figure \ref{figz8}.)  Further, both $2 - 4$ and $6 - 4 \in E(\Gamma(\mathbb{Z}_8))$. So, $\{4\}$ is a total dominating set of $\Gamma(\mathbb{Z}_8)$, and $\gamma_t(\Gamma(R)) = \gamma (\Gamma (R)) = 1$.
\end{example}

\begin{center}
\begin{figure}[h]
\begin{tikzpicture}
\draw [line width=1pt] (0,0)-- (2,0);
\draw [line width=1pt] (4,0)-- (2,0);
\draw[fill=white] (0,0) circle (2.5pt);
\draw (0,-0.5) node {$2$};
\draw[fill=white] (4,0) circle (2.5pt);
\draw (4,-0.5) node {$6$};
\draw[fill=black] (2,0) circle (2.5pt);
\draw (2,-0.5) node {$4$};
\draw (2,0.35) circle (10 pt);
\end{tikzpicture} 

\caption{Dominating and total domination set of $\Gamma (\mathbb{Z}_8)$}\label{figz8}
\end{figure}
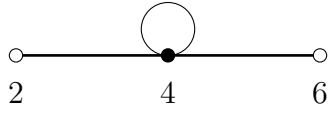
\end{center}


\section{zero-divisor graphs with girth 4 or infinity}
\label{girth4}
In this section, we start by finding zero-divisor graphs, other than $\Gamma(\mathbb{Z}_8)$, whose total domination number is $1$ by first recasting a well known result from \cite{AL1}.  Note that if $Z(R) = \ann(a)$ for some $a \in R$, we call $Z(R)$ an \textit{annihilator ideal}. 

\begin{theorem}\cite[Theorem 2.5]{AL1}\label{Result00}
Let $R$ be a commutative ring with identity. Then $\gamma ( \Gamma (R))=1$ if and only if $R \cong \mathbb{Z}_2 \times D$ where $D$ is a domain or $Z(R)$ is an annihilator ideal.
\end{theorem}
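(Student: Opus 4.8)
The plan is to unwind what $\gamma(\Gamma(R)) = 1$ means: it holds exactly when there is a vertex $a \in Z(R)^*$ with $ab = 0$ for every $b \in Z(R)^* \setminus \{a\}$. (Self-loops are irrelevant here: a loop can only affect whether $a$ dominates itself, which is not required of a dominating set.) In that form the statement is essentially a restatement of \cite[Theorem 2.5]{AL1}, so I would cite it and then sketch the argument. For ($\Leftarrow$): if $Z(R) = \ann(a)$, then $a \neq 0$ and $a \in Z(R) = \ann(a)$ gives $a^2 = 0$, so $a \in Z(R)^*$ annihilates every vertex and $\{a\}$ dominates; if $R \cong \mathbb{Z}_2 \times D$ with $D$ a domain, one checks directly that $Z(R)^* = \{(1,0)\} \cup (\{0\} \times (D \setminus \{0\}))$ and that $(1,0)$ is adjacent to every other vertex.

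For ($\Rightarrow$), I would fix a dominating vertex $a$, put $I = \ann(a)$, and note that domination forces $Z(R) \setminus \{a\} \subseteq I \subseteq Z(R)$. If $a^2 = 0$ then $a \in I$, so $I = Z(R)$ and we are in the annihilator-ideal case. The real work is when $a^2 \neq 0$, where $a \notin I$ and hence $Z(R) = I \sqcup \{a\}$. First I would show $a$ is idempotent: (i) no nonzero $c \in I$ has $c^2 = 0$, since otherwise $a + c$ annihilates $c$, so $a + c$ lies in $Z(R) \setminus \{a, 0\} = I \setminus \{0\}$, giving $a(a+c) = a^2 = 0$, a contradiction; (ii) for any nonzero $b \in I$ we have $a^2 b = 0$, so $a^2 \in Z(R)$, and if $a^2 \neq a$ then $a^2 \in I \setminus \{0\}$, whence $a^3 = 0$ and $(a^2)^2 = 0$, contradicting (i); thus $a^2 = a$. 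Since $a$ is then a nontrivial idempotent (it is $\neq 0, 1$, being a nonzero zero-divisor), $R \cong R_1 \times R_2$ with $R_1 = aR$ and $R_2 = (1-a)R$; under this identification $a = (1,0)$ and $I = \ann(a) = 0 \times R_2$, so $Z(R) = (0 \times R_2) \cup \{(1,0)\}$. For $0 \neq r \in R_1$, the element $(r,0)$ annihilates $(0,1) \neq 0$, so $(r,0) \in Z(R)$; being outside $0 \times R_2$ it must equal $(1,0)$, i.e., $r = 1$, and hence $R_1 = \{0,1\} \cong \mathbb{Z}_2$. Finally, if $R_2$ had nonzero $c, d$ with $cd = 0$, then $(1,c)$ would annihilate $(0,d) \neq 0$ and thus lie in $Z(R)$, yet it is neither $(1,0)$ nor an element of $0 \times R_2$ — so $R_2$ is a domain and $R \cong \mathbb{Z}_2 \times R_2$.

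The main obstacle I anticipate is the case $a^2 \neq 0$: the content is in extracting a nontrivial idempotent from the bare domination hypothesis (steps (i) and (ii) above) and then reading off from $Z(R) = (0 \times R_2) \cup \{(1,0)\}$ that the first factor is forced to be $\mathbb{Z}_2$ and the second a domain. Once the dichotomy $a \in \ann(a)$ versus $a \notin \ann(a)$ is isolated, the $a^2 = 0$ subcase and both halves of the converse are immediate.
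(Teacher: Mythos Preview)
Your proposal is correct and aligns with the paper's approach: both observe that $\gamma(\Gamma(R))=1$ is exactly the statement that $\Gamma(R)$ has a universal vertex, and both then invoke \cite[Theorem~2.5]{AL1}. The paper stops there, whereas you additionally supply a self-contained sketch of the Anderson--Livingston argument (the idempotent extraction in the $a^2\neq 0$ case and the identification of the factors); this extra material is sound but not needed for the paper's purposes.
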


\begin{proof}
The condition that $\gamma ( \Gamma (R))=1$ is equivalent to $\Gamma (R)$ containing a universal vertex. The result now follows directly from \cite[Theorem 2.5]{AL1}.
\end{proof}

Requiring the total domination number to be $1$ provides very clear structural information about $Z(R)$.

\begin{theorem}\label{Result0}
Let $R$ be a commutative ring with identity. Then, $\gamma_t ( \Gamma (R))=1$ if and only if $Z(R)$ is an annihilator ideal.
\end{theorem}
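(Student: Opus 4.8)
The plan is to strip the statement down to a purely ring-theoretic one by unwinding what a total dominating set of size one means. A singleton $\{x\}$ with $x \in V(\Gamma(R)) = Z(R)^*$ is a total dominating set exactly when $x$ is adjacent to \emph{every} vertex of $\Gamma(R)$, itself included; invoking the self-adjacency convention, this is the same as $x^2 = 0$ together with $xy = 0$ for all $y \in Z(R)^* \setminus \{x\}$. First I would package these conditions (adding the trivial $x \cdot 0 = 0$) into the single statement $Z(R) \subseteq \ann(x)$.

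For the ``only if'' direction, take a total dominating set $\{x\}$; then $x \in Z(R)^*$ and $Z(R) \subseteq \ann(x)$ by the observation above. The reverse containment $\ann(x) \subseteq Z(R)$ is free: if $xy = 0$ and $x \ne 0$, then $y$ is either $0$ or a zero-divisor, so $y \in Z(R)$. Hence $Z(R) = \ann(x)$ is an annihilator ideal.

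For the ``if'' direction, suppose $Z(R) = \ann(a)$. I may assume $R$ is not a domain, since otherwise $\Gamma(R)$ is empty and its total domination number is not $1$; thus $Z(R) \ne \{0\}$. Choosing $0 \ne b \in Z(R) = \ann(a)$ gives $ab = 0$, and $a \ne 0$ as well (if $a = 0$ then $\ann(a) = R \ni 1$, impossible since $1$ is not a zero-divisor), so $a \in Z(R)^*$ is a genuine vertex. Now $a \in Z(R) = \ann(a)$ yields $a^2 = 0$, so $a$ is self-adjacent, and every $y \in Z(R)^* \setminus \{a\}$ lies in $\ann(a)$, so $a - y \in E(\Gamma(R))$. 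Thus $\{a\}$ totally dominates $\Gamma(R)$ and $\gamma_t(\Gamma(R)) = 1$.

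There is no real obstacle here beyond careful bookkeeping: the content is the translation between ``$\{x\}$ is a total dominating set'' and ``$Z(R) = \ann(x)$'', and the only subtleties are that the self-adjacency convention is precisely what allows $\gamma_t = 1$, and that one must check the annihilating element is a nonzero zero-divisor so that it is actually a vertex of $\Gamma(R)$.
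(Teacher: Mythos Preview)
Your proof is correct and more direct than the paper's. The paper argues by quoting the Anderson--Livingston classification (their Theorem~2.5, restated here as Theorem~\ref{Result00}): a universal vertex exists iff $R\cong\mathbb{Z}_2\times D$ for a domain $D$ or $Z(R)$ is an annihilator ideal; then it rules out the $\mathbb{Z}_2\times D$ case by observing that $\gamma_t(\Gamma(\mathbb{Z}_2\times D))=2$. You instead unwind the definition directly: a singleton total dominating set $\{x\}$ forces $x^2=0$ together with $xy=0$ for all $y\in Z(R)^*$, which is exactly $Z(R)\subseteq\ann(x)$, and the reverse inclusion is automatic. The point is that the self-adjacency requirement built into total domination eliminates precisely the $a^2\neq 0$ branch of the Anderson--Livingston dichotomy, so you never need to invoke that structural result at all. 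What the paper's route buys is brevity given the cited theorem; what your route buys is a self-contained argument that makes transparent why the $\mathbb{Z}_2\times D$ case disappears. Your handling of the edge cases (checking $a\neq 0$ and $a\in Z(R)^*$, and setting aside domains where $\Gamma(R)$ is empty) is appropriate bookkeeping that the paper leaves implicit.
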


\begin{proof}
The result follows directly from Theorem \ref{Result00} since for a domain $D$, $\gamma_t ( \Gamma (\mathbb{Z}_2 \times D))=2$.
\end{proof}

It is of interest to note that it is possible for $Z(R)$ to be an ideal with $\gamma_t ( \Gamma (R)) \geq 2$. Such an example is constructed in Section 3 of \cite{TRAM}. In this ring, the zero-divisors form an ideal, but the zero-divisor graph has diameter $3$; hence, its total domination number is at least $2$. However, if $Z(R)$ is an annihilator ideal, then it must be the case that $\gamma_t(\Gamma (R)) = \gamma(\Gamma (R)) = 1$. In the case, when $R \cong \mathbb{Z}_2 \times D$ where $D$ a domain, we note that $\gamma_t ( \Gamma (R))=2$, while $\gamma ( \Gamma (R))=1$. As we see in the next result, $R \cong \mathbb{Z}_2 \times D$ where $D$ a domain is the only case in which $\gamma_t ( \Gamma (R)) \neq \gamma( \Gamma (R))$ when  $\gamma_t(\Gamma (R)) = 2$.

\begin{proposition}
\label{totaldom2}
Let $R$ be a commutative ring. If $\gamma_t ( \Gamma (R)) = 2$ and $R$ is not isomorphic to $\mathbb{Z}_2 \times D$ where $D$ a domain, then $\gamma_t ( \Gamma (R))=\gamma( \Gamma (R))$.
\end{proposition}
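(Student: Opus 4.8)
The plan is to use the elementary inequality $\gamma(\Gamma(R)) \le \gamma_t(\Gamma(R))$, which holds for every graph and is recorded in Section \ref{def}. Under the hypothesis $\gamma_t(\Gamma(R)) = 2$ this immediately gives $\gamma(\Gamma(R)) \in \{1,2\}$. Hence, to conclude that $\gamma(\Gamma(R)) = \gamma_t(\Gamma(R)) = 2$, it suffices to rule out the possibility $\gamma(\Gamma(R)) = 1$.

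So I would argue by contradiction: suppose $\gamma(\Gamma(R)) = 1$, i.e. $\Gamma(R)$ has a universal vertex. By Theorem \ref{Result00}, either $R \cong \mathbb{Z}_2 \times D$ for some domain $D$, or $Z(R)$ is an annihilator ideal. The first case is excluded by hypothesis, so $Z(R)$ must be an annihilator ideal. But then Theorem \ref{Result0} forces $\gamma_t(\Gamma(R)) = 1$, contradicting $\gamma_t(\Gamma(R)) = 2$. Therefore $\gamma(\Gamma(R)) \ne 1$, and combined with $\gamma(\Gamma(R)) \le 2$ we obtain $\gamma(\Gamma(R)) = 2 = \gamma_t(\Gamma(R))$.

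There is no genuine obstacle here: the proposition is a bookkeeping consequence of Theorems \ref{Result00} and \ref{Result0}, whose combined content is precisely that the characterizations of ``$\gamma = 1$'' and ``$\gamma_t = 1$'' differ exactly by the family $\mathbb{Z}_2 \times D$. The only point to appreciate is that this family is the sole obstruction, which is already built into the two cited results. A more hands-on alternative — fixing a singleton dominating set $\{c\}$ (so that $\operatorname{ann}(c) = Z(R)$) together with a minimum total dominating set $\{a,b\}$ with $ab = 0$, and then analyzing the forced adjacencies and the structure of $Z(R)$ — would also work, but it would merely reconstruct the content of those theorems, so I would not pursue it.
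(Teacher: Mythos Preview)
Your proof is correct and follows essentially the same approach as the paper: both argue that $\gamma(\Gamma(R)) \neq 1$ by combining the hypothesis $R \not\cong \mathbb{Z}_2 \times D$ with the characterizations in Theorems~\ref{Result00} and~\ref{Result0}, and then conclude via $\gamma \le \gamma_t$. The paper's version is more terse (it cites only Theorem~\ref{Result00}, leaving the use of $\gamma_t = 2 \neq 1$ to rule out the annihilator-ideal case implicit), but your explicit invocation of Theorem~\ref{Result0} makes the logic clearer without changing the argument.
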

\begin{proof}
Since $\gamma_t ( \Gamma (R)) = 2$ and $R$ is not isomorphic to $\mathbb{Z}_2 \times D$, by Theorem \ref{Result00} there is not a universal vertex. Hence, $\gamma ( \Gamma (R)) \neq 1$.
\end{proof}

We now begin to investigate if $\gamma_t( \Gamma(R) ) = \gamma( \Gamma(R) )$ when $\gamma_t( \Gamma(R)) \geq 3$, which is shown in Proposition \ref{corF} to occur only when the girth of the zero-divisor graph is $3$. The next result  provides information about the structure of total dominating sets of zero-divisor graphs.  

\begin{lemma}\label{PropC}
Let $R$ be a commutative ring. If $\Gamma (R)$ has a minimum total dominating set $D = \{ a_1, a_2, \ldots , a_n \}$, then $a_ia_j = 0$ whenever $i \neq j$.  In other words, $D$ induces a complete subgraph in $\Gamma (R)$.
\end{lemma}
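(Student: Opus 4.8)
The plan is to translate ``total dominating set'' into a ring-theoretic condition on annihilators and then run a one-step replacement argument that contradicts minimality.

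First I would record the following dictionary: for nonzero zero-divisors $v$ and $w$ (allowing $v=w$, under the paper's self-adjacency convention), $v$ is adjacent to $w$ in $\Gamma(R)$ if and only if $vw=0$. When $v\neq w$ this is the definition of an edge; when $v=w$ it is precisely the statement that $v$ is looped, i.e.\ $v^2=0$. Consequently a subset $D=\{a_1,\dots,a_n\}\subseteq Z(R)^{\ast}$ is a total dominating set of $\Gamma(R)$ if and only if every $v\in Z(R)^{\ast}$ lies in $\ann(a_i)$ for some $i$; since $0$ lies in every annihilator and each $\ann(a_i)\subseteq Z(R)$ (as $a_i\neq 0$), this is equivalent to $\bigcup_{i=1}^{n}\ann(a_i)=Z(R)$.

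Next comes the replacement step. We may assume $n\geq 2$, since otherwise the claim is vacuous. Suppose toward a contradiction that $a_1a_2\neq 0$ for some pair $i\neq j$, relabelled as $1,2$. Because $a_1$ is a nonzero zero-divisor there is $c\neq 0$ with $a_1c=0$, whence $(a_1a_2)c=0$; as $a_1a_2\neq 0$ this shows $a_1a_2\in Z(R)^{\ast}$, so it is a vertex of $\Gamma(R)$. Moreover $\ann(a_1)\subseteq\ann(a_1a_2)$ and $\ann(a_2)\subseteq\ann(a_1a_2)$. Therefore the set
\[
D':=\bigl(D\setminus\{a_1,a_2\}\bigr)\cup\{a_1a_2\}
\]
consists of vertices and satisfies $\bigcup_{x\in D'}\ann(x)\supseteq\bigcup_{i=1}^{n}\ann(a_i)=Z(R)$, hence $\bigcup_{x\in D'}\ann(x)=Z(R)$; by the dictionary, $D'$ is a total dominating set.

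Finally, $|D'|\leq n-1<n$ (and it is strictly smaller still if $a_1a_2$ already lies in $D$), contradicting the minimality of $D$. Hence $a_ia_j=0$ for all $i\neq j$, i.e.\ $D$ induces a complete subgraph of $\Gamma(R)$. I expect the only delicate point to be the bookkeeping around the self-adjacency convention when setting up the adjacency/annihilator dictionary: one must make sure the equivalence ``$v$ adjacent to $w\iff vw=0$'' is read correctly in the case $v=w$, and that $a_1a_2$ is verified to be a genuine vertex before it is allowed into $D'$. Once these are in place, the substance of the argument is the single observation $\ann(a_1)\cup\ann(a_2)\subseteq\ann(a_1a_2)$, and no chaining or iteration is needed.
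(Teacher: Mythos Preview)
Your proof is correct and follows essentially the same approach as the paper: assume $a_1a_2\neq 0$, replace $\{a_1,a_2\}$ by $\{a_1a_2\}$, and obtain a strictly smaller total dominating set. Your presentation via the annihilator dictionary $\bigcup_i\ann(a_i)=Z(R)$ is a clean way to package the verification that $D'$ is again a \emph{total} dominating set, and you are slightly more careful than the paper in explicitly checking that $a_1a_2\in Z(R)^*$ before inserting it into $D'$.
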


\begin{proof}
Let $R$ be a commutative ring with $\gamma_t ( \Gamma (R)) = n$ and a minimum total dominating set $D = \{ a_1, a_2, \ldots , a_n \}$. For sake of contradiction, suppose, without loss of generality, $a_1a_2 \neq 0$. Since $D$ is a total dominating set, we must have either have $a_1^2 = 0$, or $a_1 a_i = 0$ for some $3 \leq i \leq n$. Consider $\bar{D} = \{ a_1a_2, a_3, \ldots , a_n \} = ( D \backslash \{a_1, a_2 \} ) \cup \{ a_1a_2 \} $. If $xa_1 = 0$ or $xa_2 = 0$, then $x(a_1a_2 ) =0$. It is now easy to verify that $\bar{D}$ is a total dominating set of $\Gamma (R)$ of size $n-1$, which is a contradiction. Thus, $a_ia_j = 0$ whenever $i \neq j$, and hence $D$ induces a complete subgraph in $\Gamma (R)$.
\end{proof}

 Using this structure of minimum total dominating sets in zero-divisor graphs, we show that if the girth of the zero-divisor graph is either 4 or infinity, then the total domination number is at most 2.

\begin{proposition} \label{corF}
Let $R$ be a commutative ring. If $\gamma_t ( \Gamma (R)) \geq 3$, then the girth of $\Gamma (R)$ is $3$.
\end{proposition}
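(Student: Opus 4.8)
My plan is to prove the contrapositive: by \cite[Theorem 3]{AAS} the girth of $\Gamma(R)$ is $3$, $4$, or $\infty$, so it suffices to show that if the girth is $4$ or $\infty$ then $\gamma_t(\Gamma(R)) \le 2$. One half is immediate from Lemma~\ref{PropC}: if $\Gamma(R)$ has a minimum total dominating set $\{a_1,\dots,a_n\}$ with $n\ge 3$, that set induces a complete subgraph, so $a_1,a_2,a_3$ are distinct with $a_1a_2=a_1a_3=a_2a_3=0$, i.e.\ $a_1-a_2-a_3-a_1$ is a triangle and the girth is $3$. What remains --- the substance --- is to exhibit a total dominating set of size at most $2$ whenever the girth is $4$ or $\infty$; this automatically also handles any ring admitting no finite total dominating set.

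For girth $\infty$: since $\Gamma(R)$ is connected with diameter $\le 3$ by \cite[Theorem 2.3]{AL1} and is acyclic, it is a tree of diameter at most $3$. An elementary argument (take a longest path $x_0-x_1-x_2-x_3$; any further vertex must be a leaf adjacent to $x_1$ or $x_2$, or the path extends) shows such a tree is a star $K_{1,m}$ or a ``double star'': two adjacent vertices $u,v$ with every remaining vertex a leaf attached to $u$ or to $v$ (for example, $\Gamma(\mathbb{Z}_2\times\mathbb{Z}_4)$). Then a center together with one of its leaves, in a star, and $\{u,v\}$, in a double star, are total dominating sets, so $\gamma_t(\Gamma(R))\le 2$.

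For girth $4$: $\Gamma(R)$ is triangle-free, hence bipartite, say with parts $X$ and $Y$, and is connected of diameter $\le 3$. The goal is to find adjacent vertices $u\in X$ and $v\in Y$ with $u$ adjacent to all of $Y$ and $v$ adjacent to all of $X$; then $\{u,v\}$ is a total dominating set of size $2$. I would extract such a pair from a $4$-cycle $a-b-c-d-a$, manipulating it by the annihilator device behind Lemma~\ref{PropC} (if $xa=0$ then $x(ab)=0$, so cycle-vertices can be replaced by suitable products) so as to force one side-vertex to dominate the entire opposite side.

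The girth-$4$ step is where I expect the difficulty to concentrate. One cannot just pick arbitrary vertices from $X$ and $Y$, because a girth-$4$ zero-divisor graph need not be complete bipartite --- for instance $\Gamma(\mathbb{Z}_3\times\mathbb{Z}_4)$ has girth $4$ yet its vertex $(1,2)$ is adjacent only to $(0,2)$ --- so the real work is locating ``full'' vertices on each side, and triangle-freeness together with the diameter bound will not give this on its own (witness $C_6$: triangle-free, diameter $3$, $\gamma_t=4$, though of course not a zero-divisor graph). The ring structure is genuinely needed here, whereas the girth-$\infty$ case is purely combinatorial once the diameter bound is in hand and the finite-$\gamma_t$ case is instant from Lemma~\ref{PropC}.
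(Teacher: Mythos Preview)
The paper's entire proof is the two-sentence argument you label ``immediate'': a minimum total dominating set of size $\ge 3$ induces, by Lemma~\ref{PropC}, a triangle, so the girth is $3$. The paper does not separately handle the possibility $\gamma_t(\Gamma(R)) = \infty$; the concern you raise is legitimate, but the authors simply do not address it in this proof. (Their later results, Proposition~\ref{PropMul} and Theorem~\ref{girthinf}, implicitly close the gap by showing that girth $4$ or $\infty$ forces $\gamma_t \le 2$.)

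Your extended contrapositive programme is therefore aiming at more than the paper actually proves at this point. The girth-$\infty$ tree argument is fine and self-contained. For girth $4$, where you acknowledge the difficulty concentrates and leave only a sketch, the paper's route (taken in Proposition~\ref{PropMul}, not here) is quite different and shorter: rather than manufacturing ``full'' side-vertices via the product-replacement device, it quotes Mulay's structure theorem \cite[Theorem 2.3]{MUL}, which identifies a ring with girth-$4$ zero-divisor graph as either a subring of a product of two domains or $D \times S$ with $S \in \{\mathbb{Z}_4, \mathbb{Z}_2[X]/(X^2)\}$. In either case $\Gamma(R)$ sits inside a complete bipartite graph with at least one vertex on each side adjacent to the whole opposite side, so the pair you are searching for is handed over by the classification rather than built combinatorially.
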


\begin{proof}
By Lemma \ref{PropC}, if $\gamma_t ( \Gamma (R)) \geq 3$, then there exists a $3$-cycle consisting of $3$ distinct elements of a minimum total dominating set. Hence, the girth of $\Gamma (R)$ would be $3$.
\end{proof}

\begin{corollary}\label{cor1a}
Let $R$ be a commutative ring. If the girth of $\Gamma (R)$ is $4$ or $\infty$, then $\gamma_t ( \Gamma (R)) = \gamma ( \Gamma (R))$ or $R \cong \mathbb{Z}_2 \times D$ where $D$ is a domain. 
\end{corollary}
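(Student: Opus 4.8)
The plan is to assemble this purely from results already in hand, with essentially no new work. The hypothesis says the girth of $\Gamma(R)$ is $4$ or $\infty$, so in particular it is not $3$; the contrapositive of Proposition~\ref{corF} then gives $\gamma_t(\Gamma(R)) \le 2$. (If $\Gamma(R)$ were nonempty with no total dominating set at all, then formally $\gamma_t(\Gamma(R)) = \infty \ge 3$, so Proposition~\ref{corF} would force girth $3$, contradicting the hypothesis.) If $R$ is a domain, then $\Gamma(R)$ is the empty graph and $\gamma_t(\Gamma(R)) = \gamma(\Gamma(R)) = 0$, and $R$ is not of the form $\mathbb{Z}_2 \times D$, so the conclusion holds trivially; hence we may assume $\Gamma(R)$ has at least one vertex and therefore $1 \le \gamma_t(\Gamma(R)) \le 2$.

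Now split into the two remaining cases. If $\gamma_t(\Gamma(R)) = 1$, then the elementary inequality $\gamma(\Gamma(R)) \le \gamma_t(\Gamma(R))$ combined with $\gamma(\Gamma(R)) \ge 1$ (the graph being nonempty) forces $\gamma(\Gamma(R)) = 1 = \gamma_t(\Gamma(R))$; alternatively one may invoke Theorem~\ref{Result0} to see that $Z(R)$ is an annihilator ideal and then Theorem~\ref{Result00} to conclude $\gamma(\Gamma(R)) = 1$. If $\gamma_t(\Gamma(R)) = 2$, then Proposition~\ref{totaldom2} applies directly and yields the dichotomy in the statement: either $R \cong \mathbb{Z}_2 \times D$ for some domain $D$, which is the exceptional alternative, or $\gamma_t(\Gamma(R)) = \gamma(\Gamma(R))$. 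These cases exhaust all possibilities.

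I do not anticipate any real obstacle here: the corollary is bookkeeping that packages the girth bound of Proposition~\ref{corF} together with Proposition~\ref{totaldom2} and the trivial bound $\gamma \le \gamma_t$. The only points meriting a sentence of care are the degenerate boundary situations — the empty graph (when $R$ is a domain) and the value $\gamma_t(\Gamma(R)) = 1$ — and both are dispatched immediately as above.
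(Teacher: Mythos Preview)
Your proposal is correct and follows essentially the same approach as the paper: invoke Proposition~\ref{corF} to force $\gamma_t(\Gamma(R)) \le 2$, dispose of the case $\gamma_t(\Gamma(R)) = 1$ via the trivial inequality $\gamma \le \gamma_t$, and feed the case $\gamma_t(\Gamma(R)) = 2$ into Proposition~\ref{totaldom2}. The only difference is that you are more explicit about the degenerate boundary situations (the empty graph when $R$ is a domain, and the formal meaning of $\gamma_t = \infty$), which the paper's proof simply glosses over.
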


\begin{proof}
By Proposition \ref{corF}, we have $\gamma_t ( \Gamma (R)) = 1$ or $2$. If $\gamma_t ( \Gamma (R)) = 1$, then $\gamma_t ( \Gamma (R)) = \gamma ( \Gamma (R))$ always. If $\gamma_t ( \Gamma (R)) = 2$, then by Proposition \ref{totaldom2}, $\gamma_t ( \Gamma (R)) = \gamma ( \Gamma (R))$ unless $R$ is isomorphic to $\mathbb{Z}_2 \times D$ where $D$ a domain. 
\end{proof}

\begin{proposition}\label{PropMul} 
Let $R$ be a commutative ring. If the girth of $\Gamma (R)$ is $4$, then $\gamma_t ( \Gamma (R)) = \gamma ( \Gamma (R)) = 2$.
\end{proposition}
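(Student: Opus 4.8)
The plan is to trap both invariants between $2$ and $2$, i.e.\ to prove
$2 \le \gamma(\Gamma(R)) \le \gamma_t(\Gamma(R)) \le 2$, using the inequality $\gamma(\Gamma(R)) \le \gamma_t(\Gamma(R))$ together with the results already established. The upper bound is essentially free: the girth of $\Gamma(R)$ is $4$, hence not $3$, so the contrapositive of Proposition~\ref{corF} yields $\gamma_t(\Gamma(R)) \le 2$, and therefore $\gamma(\Gamma(R)) \le \gamma_t(\Gamma(R)) \le 2$. Thus the entire problem reduces to showing $\gamma(\Gamma(R)) \ne 1$: this forces $\gamma(\Gamma(R)) = 2$, then $\gamma_t(\Gamma(R)) \ge \gamma(\Gamma(R)) = 2$, and combined with the upper bound this gives $\gamma(\Gamma(R)) = \gamma_t(\Gamma(R)) = 2$.

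To rule out $\gamma(\Gamma(R)) = 1$, I would invoke the equivalence noted in the proof of Theorem~\ref{Result00}: $\gamma(\Gamma(R)) = 1$ if and only if $\Gamma(R)$ has a universal vertex. Suppose for contradiction that $v$ is a universal vertex. Since the girth of $\Gamma(R)$ is $4$, there is a $4$-cycle $a - b - c - d - a$. The vertex $v$ coincides with at most one of $a,b,c,d$, so it is incident to at most two of the four cycle edges; hence some cycle edge $x - y$ has $x \ne v$ and $y \ne v$. Then $v$, $x$, $y$ are three distinct vertices with $v - x$, $v - y$ (by universality) and $x - y$ all in $E(\Gamma(R))$, so $\Gamma(R)$ contains a $3$-cycle and its girth is $3$, a contradiction. (Alternatively, one can first apply Corollary~\ref{cor1a}: since $\Gamma(\mathbb{Z}_2 \times D)$ is a star, it has girth $\infty \ne 4$, so Corollary~\ref{cor1a} already gives $\gamma_t(\Gamma(R)) = \gamma(\Gamma(R))$, and it then suffices to exclude the common value $1$ by the same universal-vertex argument.)

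I do not anticipate a serious obstacle: the substantive work was already absorbed into Lemma~\ref{PropC} and Proposition~\ref{corF}, and what remains is bookkeeping with $\gamma \le \gamma_t$ and the established bounds. The only point requiring a line of genuine (if elementary) argument is the incompatibility of a universal vertex with girth exactly $4$ — namely, that a universal vertex together with any cycle yields a triangle — which is precisely what drives the lower bound $\gamma(\Gamma(R)) \ge 2$.
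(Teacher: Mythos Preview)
Your proof is correct, and it takes a genuinely different route from the paper's. The paper invokes an external structural result, \cite[Theorem 2.3]{MUL}, to conclude that a ring whose zero-divisor graph has girth $4$ is (up to isomorphism) a subring of a product of two domains or of the form $D \times S$ with $S \in \{\mathbb{Z}_4, \mathbb{Z}_2[X]/(X^2)\}$; from this it reads off that $\Gamma(R)$ is a subgraph of a complete bipartite graph with at least two vertices on each side, whence $\gamma = \gamma_t = 2$. Your argument, by contrast, stays entirely within the paper: the upper bound $\gamma_t \le 2$ comes for free from the contrapositive of Proposition~\ref{corF}, and the lower bound $\gamma \ge 2$ follows from the elementary graph-theoretic observation that a universal vertex plus any edge disjoint from it produces a triangle. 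Your approach is more self-contained and avoids the dependence on Mulay's classification; the paper's approach, on the other hand, yields extra structural information about $R$ that is not strictly needed for the proposition but situates it in a broader context.
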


\begin{proof}
By \cite[Theorem 2.3]{MUL}, $R$ is either isomorphic to a subring of a product of two domains, or is isomorphic to $D \times S$, where $D$ is a domain and $S$ is either $\mathbb{Z}_4$ or $\mathbb{Z}_2 [X]/(X^2)$. Since the girth of $\Gamma (R)$ is $4$, we know $R$ is not isomorphic to $\mathbb{Z}_2 \times D$ where $D$ a domain. Thus, $\Gamma (R)$ is either complete bipartite or a subgraph of a complete bipartite graph, which contains at least two vertices in each disjoint vertex set. Hence, $\gamma_t ( \Gamma (R)) = \gamma ( \Gamma (R))=2$. 
\end{proof}

\begin{proposition}\label{PropSarah} 
Let $R$ be a commutative ring with girth of $\Gamma (R)$ equal to $\infty$. Then either $R \cong \mathbb{Z}_2 \times D$ where $D$ is a domain, or $\gamma_t ( \Gamma (R)) = \gamma ( \Gamma (R)))$.
\end{proposition}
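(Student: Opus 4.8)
The plan is to pin down the possible values of $\gamma_t(\Gamma(R))$ under the girth-$\infty$ hypothesis and then dispatch each value. First I would dispose of the case in which $\gamma_t(\Gamma(R))$ is infinite. A total dominating set is a dominating set, so $\gamma(\Gamma(R)) \leq \gamma_t(\Gamma(R))$; conversely, any dominating set can be enlarged to a total dominating set of at most twice its size (replace each chosen vertex by an edge incident to it, which is possible since $\Gamma(R)$ is connected with no isolated vertices). Hence $\gamma(\Gamma(R))$ and $\gamma_t(\Gamma(R))$ are simultaneously finite or infinite, so if $\gamma_t(\Gamma(R)) = \infty$ then $\gamma(\Gamma(R)) = \infty$ too and the conclusion holds. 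Thus I may assume $\gamma_t(\Gamma(R))$ is finite.

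Now, since the girth of $\Gamma(R)$ is $\infty$ and in particular is not $3$, Proposition~\ref{corF} rules out $\gamma_t(\Gamma(R)) \geq 3$, leaving $\gamma_t(\Gamma(R)) \in \{1,2\}$. If $\gamma_t(\Gamma(R)) = 1$, then since any total dominating set is a dominating set we get $\gamma(\Gamma(R)) = 1 = \gamma_t(\Gamma(R))$, and we are done. If $\gamma_t(\Gamma(R)) = 2$, then Proposition~\ref{totaldom2} applies verbatim: either $R \cong \mathbb{Z}_2 \times D$ for a domain $D$, or $\gamma(\Gamma(R)) = \gamma_t(\Gamma(R)) = 2$. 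In either branch the statement of the proposition follows, completing the argument.

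The hard part, such as it is, is justifying that the girth-$\infty$ hypothesis cannot force $\gamma_t(\Gamma(R))$ up to $\infty$ for infinite $R$, since the proof of Proposition~\ref{corF} runs through Lemma~\ref{PropC}, which presumes a \emph{finite} minimum total dominating set. The first-paragraph observation ($\gamma_t$ finite iff $\gamma$ finite) resolves this cleanly. Alternatively, one can argue structurally: by \cite[Theorem 2.3]{AL1} the graph $\Gamma(R)$ is connected with $\diam(\Gamma(R)) \leq 3$, and being acyclic it is then a star or a double star with possibly some looped vertices; every such graph readily admits a total dominating set of size at most $2$, which gives $\gamma_t(\Gamma(R)) \leq 2$ outright and makes the appeal to Proposition~\ref{corF} unnecessary.
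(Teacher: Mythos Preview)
Your argument is correct and follows essentially the same line as the paper: use the girth hypothesis to force $\gamma_t(\Gamma(R))\le 2$, then dispose of $\gamma_t=1$ trivially and $\gamma_t=2$ via Proposition~\ref{totaldom2}. The paper's proof is terser---it simply invokes Lemma~\ref{PropC} to obtain $\gamma_t\le 2$ and proceeds---but in doing so it silently assumes a finite minimum total dominating set exists, exactly the point you flag. Your first paragraph (or, better, your alternative tree-of-diameter-$\le 3$ argument) closes that gap cleanly and makes the proof self-contained; the paper only addresses the infinite case later, in Lemma~\ref{infinite}.
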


\begin{proof}
 Suppose $R \ncong \mathbb{Z}_2 \times D$ where $D$ a domain, then $\gamma_t ( \Gamma (R)) \leq 2$ by Lemma \ref{PropC}. If $\gamma_t ( \Gamma (R)) = 1$, then $\gamma_t ( \Gamma (R)) = \gamma ( \Gamma (R))$. If $\gamma_t ( \Gamma (R)) = 2$, then by Proposition \ref{totaldom2}, $\gamma_t ( \Gamma (R)) = \gamma ( \Gamma (R))$.
\end{proof}

Using results from \cite{DS}, we can classify all possibles cases for domination and total domination numbers when the girth of $\Gamma(R)$ is $\infty$.

\begin{theorem}\label{girthinf}
Let $R$ be a commutative ring with girth of $\Gamma(R)$ equal to $\infty$. Then
\begin{enumerate}
\item $\gamma_t(\Gamma(R)) = 2$ and $\gamma(\Gamma(R) )=1$ when $R \cong \mathbb{Z}_2 \times D$ for any domain $D$.
\item $\gamma_t(\Gamma(R)) = \gamma(\Gamma(R))=2$ when $R \cong \mathbb{Z}_2 \times \mathbb{Z}_4$ or $R \cong \mathbb{Z}_2 \times \mathbb{Z}_2[X]/(X^2)$.
\item $\gamma_t(\Gamma(R)) = \gamma(\Gamma(R))=1$ otherwise.
\end{enumerate}
\end{theorem}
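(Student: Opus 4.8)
The plan is to run through the three isomorphism cases, which between them exhaust the commutative rings whose zero-divisor graph is acyclic. Concretely, I would invoke from \cite{DS} the classification of the rings $R$ with girth of $\Gamma(R)$ equal to $\infty$: up to isomorphism, $R$ is $\mathbb{Z}_2 \times D$ for a domain $D$, or $\mathbb{Z}_2 \times \mathbb{Z}_4$, or $\mathbb{Z}_2 \times \mathbb{Z}_2[X]/(X^2)$, or else $Z(R)$ is an annihilator ideal. (As throughout, $R$ is assumed not to be a domain, so $\Gamma(R)$ is nonempty and $\gamma(\Gamma(R)) \ge 1$.) A first step is to check these four possibilities are mutually exclusive; the only point needing argument is that $Z(\mathbb{Z}_2\times D)$, $Z(\mathbb{Z}_2\times\mathbb{Z}_4)$, and $Z(\mathbb{Z}_2\times\mathbb{Z}_2[X]/(X^2))$ are not annihilator ideals. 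For $\mathbb{Z}_2\times D$ this is immediate, since no single annihilator contains both $(1,0)$ and a nonzero element of $\{0\}\times D$; for the other two rings one compares cardinalities, as the annihilator of any element of such a ring has size $1$, $2$, $4$, or $8$, whereas $|Z(R)| = 6$. Hence the ``otherwise'' clause of the theorem is exactly the case that $Z(R)$ is an annihilator ideal.

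I would then dispatch the two cases in which $\Gamma(R)$ has a universal vertex. If $Z(R)$ is an annihilator ideal (part (3)), Theorem~\ref{Result0} gives $\gamma_t(\Gamma(R)) = 1$, and then $1 \le \gamma(\Gamma(R)) \le \gamma_t(\Gamma(R)) = 1$ forces $\gamma(\Gamma(R)) = 1$ as well. If $R\cong\mathbb{Z}_2\times D$ (part (1)), Theorem~\ref{Result00} gives $\gamma(\Gamma(R)) = 1$, since $(1,0)$ is universal; Proposition~\ref{corF} (applied via its contrapositive, girth $\neq 3$) gives $\gamma_t(\Gamma(R)) \le 2$; and Theorem~\ref{Result0} rules out $\gamma_t(\Gamma(R)) = 1$ because $Z(R)$ is not an annihilator ideal. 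So $\gamma_t(\Gamma(R)) = 2$, with $\{(1,0),(0,d)\}$ ($d \in D\setminus\{0\}$) an explicit witnessing total dominating set.

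For part (2), where $R\cong\mathbb{Z}_2\times\mathbb{Z}_4$ or $R\cong\mathbb{Z}_2\times\mathbb{Z}_2[X]/(X^2)$: here $R\not\cong\mathbb{Z}_2\times D$ and $Z(R)$ is not an annihilator ideal, so by Theorem~\ref{Result00} there is no universal vertex and hence $\gamma(\Gamma(R)) \ge 2$; on the other hand $\gamma_t(\Gamma(R)) \le 2$ by Proposition~\ref{corF}, so $2 \le \gamma(\Gamma(R)) \le \gamma_t(\Gamma(R)) \le 2$ and all equal $2$. (One can instead just exhibit a minimum total dominating set, e.g.\ $\{(1,0),(0,2)\}$ in $\Gamma(\mathbb{Z}_2\times\mathbb{Z}_4)$, and the analogous pair in the other ring.) If one prefers, parts (1)--(3) can each be routed through Corollary~\ref{cor1a}, Proposition~\ref{totaldom2}, and Theorem~\ref{Result0}, but the argument above is self-contained given \cite{DS}.

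I do not anticipate a genuine obstacle: every step is either a direct appeal to an earlier result (Theorems~\ref{Result00} and \ref{Result0}, Proposition~\ref{corF}) or a one-line verification on a small ring. The one thing requiring care is the bookkeeping at the start --- lining up the trichotomy in the statement with the classification of \cite{DS}, so that ``otherwise'' is unambiguously the annihilator-ideal case and the three named isomorphism types are genuinely disjoint --- which is why I would front-load the mutual-exclusivity check.
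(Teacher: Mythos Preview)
Your argument is correct and rests on the same external input as the paper---the classification from \cite{DS} of rings with acyclic zero-divisor graph---but you organize the ``otherwise'' case differently. The paper simply lists the remaining rings explicitly (the seven small local rings $\mathbb{Z}_4$, $\mathbb{Z}_8$, $\mathbb{Z}_9$, $\mathbb{Z}_2[X]/(X^2)$, $\mathbb{Z}_3[X]/(X^2)$, $\mathbb{Z}_2[X]/(X^3)$, $\mathbb{Z}_4[X]/(X^2-2,2X)$, together with the one-parameter family where $Z(R)=\ann(x)$ for a unique nonzero nilpotent $x$) and inspects each to see $\gamma_t=\gamma=1$. You instead collapse that list to the single assertion ``$Z(R)$ is an annihilator ideal'' and then invoke Theorem~\ref{Result0} once. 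Your route is cleaner and explains \emph{why} the answer is $1$ uniformly, but note that \cite{DS} (as cited here) does not literally hand you the statement ``otherwise $Z(R)$ is an annihilator ideal''; you are implicitly checking that each ring on the explicit list satisfies this, which is exactly the per-ring verification the paper does. So the two proofs have the same content, with yours adding the conceptual observation that the common feature is $Z(R)=\ann(a)$. Your front-loaded mutual-exclusivity check (in particular the cardinality argument that $|Z(\mathbb{Z}_2\times\mathbb{Z}_4)|=6$ cannot be an annihilator) is a nice touch the paper omits.
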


\begin{proof}
It is straightforward to check (1) and (2).  For (3), by \cite[Theorem 1.7]{DS} and \cite[Theorem 1.12]{DS}, the only other rings up to isomorphism with girth of $\Gamma(R)$ equal to $\infty$ are $\mathbb{Z}_4$, $\mathbb{Z}_8$, $\mathbb{Z}_9$, $\mathbb{Z}_2[X]/(X^2)$, $\mathbb{Z}_3[X]/(X^2)$, $\mathbb{Z}_2[X]/(X^3)$, and $\mathbb{Z}_4[X]/(X^2-2, 2X)$, as well as a ring in which there is exactly one non-zero nilpotent element $x$ such that $x^2=0$, the ideal $\{0,x\}$ is prime, and $\ann(x)=Z(R)$.  In each of these cases, it is easy to see that $\gamma_t(\Gamma(R)) = \gamma(\Gamma(R))=1$.
\end{proof}

\section{Finite domination number}
\label{finitedominationnumber}
Investigating zero-divisor graphs with small total domination number has allowed us to determine that $\gamma_t(\Gamma (R)) = \gamma ( \Gamma (R))$ for zero-divisor graphs with girth $4$ or infinity when $R$ is not isomorphic to $\mathbb{Z}_2 \times D$ for any domain $D$. By \cite[Theorem 3]{AAS}, the girth of a zero-divisor graph is $3$, $4$, or infinity. Thus, it is left to determine if $\gamma_t(\Gamma (R)) = \gamma ( \Gamma (R))$ for zero-divisor graphs whose girth is $3$ and total domination number is greater than or equal to $3$. We start this section by showing $\gamma_t(\Gamma (R)) = \gamma ( \Gamma (R))$ when $\gamma_t ( \Gamma (R)) = 3$. 


\begin{proposition} \label{propG}
Let $R$ be a commutative ring. If $\gamma_t ( \Gamma (R)) = 3$, then $\gamma ( \Gamma (R)) = 3$
\end{proposition}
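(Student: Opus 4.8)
The plan is to argue by contradiction: assume $\gamma_t(\Gamma(R)) = 3$ but $\gamma(\Gamma(R)) \le 2$. Since $\gamma \le \gamma_t$ always, we would have $\gamma(\Gamma(R)) \in \{1, 2\}$. The case $\gamma(\Gamma(R)) = 1$ is immediately ruled out: by Theorem~\ref{Result00}, a universal vertex forces $R \cong \mathbb{Z}_2 \times D$ or $Z(R)$ an annihilator ideal, and in either case one checks (via Theorem~\ref{Result0} and the remarks following it, together with $\gamma_t(\Gamma(\mathbb{Z}_2 \times D)) = 2$) that $\gamma_t(\Gamma(R)) \le 2$, contradicting $\gamma_t(\Gamma(R)) = 3$. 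So we may assume $\gamma(\Gamma(R)) = 2$, and the heart of the argument is to derive a contradiction from the existence of a dominating set $\{x, y\}$ of size $2$ that is not extendable to a total dominating set of size $3$.

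The key idea is to take a minimum dominating set $\{x, y\}$ and attempt to repair it into a total dominating set of size at most $3$, which would contradict $\gamma_t(\Gamma(R)) = 3$ only if the repair succeeds with size $\le 2$; more carefully, I want to produce a \emph{total} dominating set of size $3$ whose failure to exist is what we are really exploiting — so instead the contradiction should run the other way. Let me restructure: assume $\gamma_t(\Gamma(R)) = 3$ with minimum total dominating set $D = \{a_1, a_2, a_3\}$, which by Lemma~\ref{PropC} induces a triangle ($a_i a_j = 0$ for $i \ne j$). Now suppose toward contradiction that $\gamma(\Gamma(R)) = 2$ with dominating set $\{x, y\}$. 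Every vertex of $\Gamma(R)$ is annihilated by $x$ or $y$ (or equals $x$ or $y$). First I would handle whether $x^2 = 0$ or $y^2 = 0$: if, say, $x^2 = 0$, then $x$ totally dominates everything in $\mathrm{ann}(x)$ including itself, so we only need to cover $\mathrm{ann}(y) \setminus \mathrm{ann}(x)$ and the vertex $y$ itself; choosing a neighbor $y'$ of $y$ (which exists since $\Gamma(R)$ has no isolated vertices), the set $\{x, y, y'\}$ is a total dominating set of size $\le 3$ — but this does not immediately contradict $\gamma_t = 3$. The real leverage must come from showing we can do it with size $2$.

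Here is the mechanism I expect to work. Since $\{x,y\}$ dominates, consider the element $xy$. If $xy = 0$, then $\{x, y\}$ induces an edge, and I claim $\{x,y\}$ is already a total dominating set: any vertex $v \notin \{x,y\}$ is adjacent to $x$ or $y$, and $x$ is adjacent to $y$, $y$ to $x$ — so $\{x,y\}$ is a total dominating set of size $2$, contradicting $\gamma_t(\Gamma(R)) = 3$. Hence we must have $xy \ne 0$. Now the interesting case is $xy \ne 0$, and I would mimic the swap trick from the proof of Lemma~\ref{PropC}: the product $xy$ is a nonzero zero-divisor (it annihilates anything either $x$ or $y$ annihilates), so $xy \in V(\Gamma(R))$, and it is adjacent to every vertex in $\mathrm{ann}(x) \cup \mathrm{ann}(y)$. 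The only vertices possibly not dominated by $xy$ are those among $\{x, y\}$ not killed by it, i.e. $x$ itself (unless $x^2=0$ or $x(xy)=0$) and $y$. So I would examine the small set $\{xy\}$ or $\{xy, z\}$ for a suitable neighbor $z$, and show that in every sub-case one obtains a total dominating set of size $\le 2$. The main obstacle, and the step requiring the most care, is the bookkeeping when $x^2 \ne 0$, $y^2 \ne 0$, and $xy \ne 0$ simultaneously: here neither $x$, $y$, nor $xy$ is self-adjacent a priori, and one must use the domination hypothesis to locate the neighbors of $x$ and of $y$ among the annihilator structure, then verify that replacing $\{x,y\}$ by a cleverly chosen pair (perhaps $\{xy, w\}$ where $w$ is a common neighbor forced to exist, or invoking that $x$ must be annihilated by some vertex which must then be $y$ or lie in $\mathrm{ann}(y)$) yields a total dominating set of size $2$. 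I would also keep Lemma~\ref{PropC} in reserve to constrain what a size-$3$ minimum total dominating set can look like, using its triangle structure to rule out configurations. Once a size-$2$ total dominating set is produced in every case, the contradiction with $\gamma_t(\Gamma(R)) = 3$ is complete, so $\gamma(\Gamma(R)) = 3$.
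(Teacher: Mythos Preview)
Your outline matches the paper's argument through the easy cases: ruling out $\gamma=1$, and handling $\gamma=2$ with $xy=0$. But the case $\gamma(\Gamma(R))=2$ with $xy\ne 0$ is where the actual work lies, and your proposal does not close it. You suggest trying $\{xy,w\}$ for some common neighbor $w$ of $x$ and $y$, but you give no reason such a $w$ exists; indeed, nothing in the hypothesis ``$\{x,y\}$ is a dominating set with $xy\ne 0$'' forces $\ann(x)\cap\ann(y)$ to contain a nonzero element. Your fallback (``$x$ must be annihilated by some vertex which must then be $y$ or lie in $\ann(y)$'') is also not justified: a neighbor of $x$ is dominated by $\{x,y\}$ simply by being adjacent to $x$, so it need not lie in $\ann(y)$.

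The idea you mention only in passing---``keep Lemma~\ref{PropC} in reserve''---is exactly what the paper deploys to finish. Write the minimum total dominating set as $D=\{x_1,x_2,x_3\}$; by Lemma~\ref{PropC} it induces a triangle. Since $\{a,b\}$ dominates (using the paper's $a,b$ for your $x,y$), each $x_i$ is annihilated by $a$ or $b$. If some $x_i$ annihilates both, then $\{ab,x_i\}$ is a total dominating set of size $2$ (note $(ab)x_i=0$). Otherwise one may take $x_1\in\ann(a)\setminus\ann(b)$ and $x_2\in\ann(b)\setminus\ann(a)$, and then $\bar D=\{ax_2,\,bx_1\}$ works: both entries are nonzero, $(ax_2)(bx_1)=ab\,x_1x_2=0$, and any vertex annihilated by $a$ (resp.\ $b$) is annihilated by $ax_2$ (resp.\ $bx_1$). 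So the missing ingredient is not more bookkeeping on $x,y,xy$ but rather pulling the replacement elements out of the already-given triangle $D$.
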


\begin{proof}
Assume $\gamma_t ( \Gamma (R))=3$ with $D = \{ x_1 , x_2 , x_3 \}$ a minimum total dominating set. Since $\gamma ( \Gamma (R)) \leq \gamma_t ( \Gamma (R))$, we only need to rule out the cases when $\gamma ( \Gamma (R))=1$ and $\gamma ( \Gamma (R))= 2$. Suppose $\gamma ( \Gamma (R)) = 1$ and $\{ a \}$ is a minimum dominating set. Let $b \in Z(R)^* \backslash \{ a \}$.  Then $ab=0$, which means $\{ a, b \}$ is a total dominating set of $\Gamma (R)$, a contradiction. 

Now suppose $\gamma ( \Gamma (R)) = 2$ and $\{ a , b \}$ is a dominating set.  If $ab = 0$, then $\{ a , b \}$ is a total dominating set, a contradiction. Hence $ab \neq 0$. By Lemma \ref{PropC}, $x_1x_2 = x_1x_3=x_2x_3=0$. 
If, without loss of generality, $ax_1 = bx_1 = 0$, then $\{ ab, x_1 \}$ is a total dominating set, also a contradiction.

Finally, suppose that $a$ and $b$ do not have a common annihilator from $D$. Without loss of generality, we have $ax_1 = bx_2 = 0$ with $ax_2 \neq 0$ and $bx_1 \neq 0$. Consider $\bar{D} = \{ ax_2, bx_1\}$. It is straightforward to see that $\bar{D}$ is a total dominating set, a contradiction. Thus, $\gamma ( \Gamma (R)) = 3$.
\end{proof}

We can extend this result to all zero-divisor graphs with finite domination number whose total domination number is greater than $3$.

\begin{theorem}
\label{totaldom4}
Let $R$ be a commutative ring such that $4\leq \gamma _{t}\left( \Gamma \left( R\right) \right) <\infty .$ Then $\gamma \left( \Gamma \left( R\right) \right) =\gamma _{t}\left( \Gamma \left( R\right) \right) .$
\end{theorem}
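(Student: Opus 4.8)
Write $n=\gamma_t(\Gamma(R))$. Since every total dominating set is a dominating set, $\gamma(\Gamma(R))\le n$, so it suffices to rule out $\gamma(\Gamma(R))<n$. Suppose for a contradiction that $\Gamma(R)$ has a dominating set $S=\{b_1,\dots,b_m\}$ with $m<n$, and fix a minimum total dominating set $D=\{a_1,\dots,a_n\}$; by Lemma~\ref{PropC} the set $D$ induces a complete subgraph, so $a_ia_j=0$ whenever $i\neq j$. The plan is to manufacture from $S$ and $D$ a total dominating set with fewer than $n$ vertices, contradicting the minimality of $D$.

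First dispose of $m=1$: then $\gamma(\Gamma(R))=1$, so by Theorem~\ref{Result00} either $\Gamma(R)$ has a universal vertex $v$, whence $\{v,w\}$ is a total dominating set for any neighbor $w$ of $v$, or $R\cong\mathbb{Z}_2\times D'$ for a domain $D'$, whence $\gamma_t(\Gamma(R))=2$; since $n\geq 4$ both are impossible, so $m\geq 2$. If $S$ is itself a total dominating set we are done, so assume not, and call $b\in S$ \emph{deficient} if $b$ has no neighbor in $S$ (a loop at $b$, i.e.\ $b^2=0$, counting as a neighbor). Two facts drive the repair step: (i) since adjacency is symmetric, no vertex of $S$ is adjacent to a deficient vertex; (ii) since $D$ is a total dominating set, each deficient $b$ has a neighbor in $D$, which by (i) lies outside $S$.

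Now repair the deficient vertices. If they can be ``hit'' by some $A_0\subseteq D$ with $|A_0|\leq n-m-1$ (each deficient vertex adjacent to a member of $A_0$), then $S\cup A_0$ is a total dominating set — it still dominates $\Gamma(R)$, each deficient vertex now has a neighbor in $A_0$, and each vertex of $A_0$ has a neighbor in $S\cup A_0$ (another vertex of $D$ if $|A_0|\geq 2$, a deficient vertex of $S$ if $|A_0|=1$) — of size $\leq m+(n-m-1)<n$, a contradiction; this settles every case with $n\geq 2m+1$. Otherwise we cannot enlarge $S$ and must replace deficient vertices. The key local move: if a deficient $b$ is nilpotent, take $k\geq 2$ maximal with $b^k\neq 0$ and set $S'=(S\setminus\{b\})\cup\{b^k\}$; then $|S'|\leq m$, $S'$ still dominates $\Gamma(R)$ (everything $b$ dominated is dominated by $b^k$ since $\ann(b)\subseteq\ann(b^k)$, and $b$ itself is dominated by $b^k$ since $b\cdot b^k=b^{k+1}=0$), the new vertex $b^k$ is self-adjacent because $b^{2k}=0$, and by (i) no previously good vertex of $S$ becomes deficient, so $S'$ has strictly fewer deficient vertices and we iterate. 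More generally, several deficient vertices can be replaced at once by products $ba_j$ with distinct $a_j\in D$: completeness of $D$ makes such products pairwise adjacent, and $ba_j$ retains all of $b$'s domination since $(ba_j)v=a_j(bv)$.

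The technical heart — and the step I expect to be the main obstacle — is a deficient vertex $b$ that is \emph{not} nilpotent: it has no top power to swap for, and if moreover $\ann(b^2)=\ann(b)$ then no nonzero product $by$ is adjacent to $b$, so neither simple replacement applies. This is exactly the behaviour behind the exceptional ring $\mathbb{Z}_2\times D'$, where the only minimum dominating set $\{(1,0)\}$ has $(1,0)^2=(1,0)\neq 0$ and $\gamma=1\neq 2=\gamma_t$; so this is precisely where the hypothesis $n\geq 4$ must be exploited, showing that such a $b$ together with the large complete set $D$ still yields a total dominating set of size below $n$. Once this is handled in all sub-cases, every possibility has produced a total dominating set on fewer than $n$ vertices, contradicting $\gamma_t(\Gamma(R))=n$; therefore $\gamma(\Gamma(R))=\gamma_t(\Gamma(R))$.
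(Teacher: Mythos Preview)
Your proposal is a sketch, not a proof: you correctly isolate the obstruction (a deficient $b\in S$ that is not nilpotent, so that neither enlarging $S$ by vertices of $D$ nor swapping $b$ for a top power works) and then write ``Once this is handled in all sub-cases\ldots'' without actually handling it. That case is the entire content of the theorem. The side remark that ``several deficient vertices can be replaced at once by products $ba_j$'' is also not valid as stated: if $a_j$ is a neighbor of $b$ then $ba_j=0$, and if $a_j$ is not a neighbor then $b\cdot(ba_j)=b^2a_j$ need not vanish, so $ba_j$ fails to dominate $b$ itself. Finally, the range you declare ``settled'' ($n\ge 2m+1$) is quite thin once $n\ge 4$: every $m$ with $n/2\le m<n$ lands in the regime you have not treated, and in particular $m=n-1$ always does.

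The paper's argument is structurally different from your enlarge-or-replace scheme. It fixes an extremal minimum dominating set $D$: one maximizing the number of elements $z\in D$ with $\ann(z)\cap D\neq\emptyset$. A deficient $x\in D$ then exists, and a minimum total dominating set $T$ supplies $c\in T$ with $cx=0$ and (using $|D|<|T|$ and $|T|\ge 4$) a further $d\in T\setminus(D\cup\{c\})$. The crucial move is additive: one looks at $x-d\in Z(R)^\ast$ and splits into cases according to whether some $y\in D$ annihilates $x-d$, and if so which $y$. In every branch one manufactures a new minimum dominating set $\tilde D$ (typically by replacing one or two elements of $D$ by suitable products or by $c$, $x-d$, $yd$, etc.) with strictly more internally annihilated elements than $D$, contradicting the extremal choice. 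The two ingredients your outline is missing are this extremal selection of $D$ and the use of the difference $x-d$; without them there is no evident way to dispose of a non-nilpotent deficient vertex.
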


\begin{proof}
We assume $\gamma \left( \Gamma \left( R\right) \right) <\gamma _{t}\left(
\Gamma \left( R\right) \right) $ and arrive at a contradiction. Among all
minimum dominating sets of $\Gamma \left( R\right) $, select a minimum
dominating set $D$ which maximizes the number of elements $z \in D$ such that $\ann(z) \cap D \neq \emptyset$. Note that since $D$
is not a total dominating set, there exists $x\in D$ such that $xy\neq 0$ for all $y\in D$; i.e., $ann(x) \cap D = \emptyset$.

Let $T$ be a minimum total dominating set of $\Gamma \left( R\right) ,$ so
there exists $c\in T$ such that $cx=0.$ Since no element of $D$ dominates $%
x, $ $c\notin D.$ For sake of contradiction, assume $T\setminus \left\{ c\right\} \subseteq D$. Since $\left\vert D\right\vert <\left\vert
T\right\vert ,$ it must the case that  $D=T \setminus \left\{ c\right\}.$ Hence, $x \in T$. Since $\left\vert
T\right\vert \geq 4,$ there exists $f\in T \setminus \left\{ x,c\right\} ,$
and since $T$ induces a complete subgraph of $\Gamma \left( R\right)$ by Lemma \ref{PropC}, $xf=0.$ However, $f\in D$, and $xy\neq 0$ for all $y\in D,$ is a contadiction. Therefore, $T \setminus \{c\}$ is not a subset of $D$, and
there must exist $d\in T \setminus \left\{ c\right\} $ such that $d\notin D.$

Now consider $x-d\in R.$ We chose $c$ so that $cx=0,$ and $cd=0$ since,
again, $T$ induces a complete subgraph of $\Gamma \left( R\right) .$
Therefore, $c\left( x-d\right) =0,$ and since $x\neq d,$ we have $x-d\in
Z\left( R\right) ^{\ast }.$ There are two cases to consider.

\textbf{Case 1}:\ Assume for every $y\in D$, $y\left( x-d\right)
\neq 0.$ Since $D$ is a dominating set, this implies $x-d\in D$ and $\left(
x-d\right) ^{2}\neq 0.$ If for every $z\in \ann\left( x\right) ^{\ast }$
there exists $f\in D\setminus \left\{ x\right\} $ such that $fz=0,$ then $%
\tilde{D}=\left( D \setminus \left\{ x\right\} \right) \cup \left\{
c\right\} $ is a minimum dominating set, and since $c\left( x-d\right) =0,$ we see that $\tilde{D}$ has more elements with annihilators having nontrivial intersection with $\tilde{D}$ than $D$ does, a contradiction. Thus, there must exist $s\in \ann\left( x\right) ^{\ast }$ such that 
$fs\neq 0$ for all $f\in D.$ Note that $s\neq x$ because $x^{2}\neq 0.$

If for every $z\in \ann\left( x-d\right) ^{\ast }$ there exists $f\in
D\setminus \left\{ x-d\right\} $ such that $fz=0,$ then $\tilde{D}=\left(
D\setminus \left\{ x-d\right\} \right) \cup \left\{ c\right\} $ is a
minimum dominating set, and since $cx=0,$  we see that $\tilde{D}$ has more elements with annihilators having nontrivial intersection with $\tilde{D}$ than $D$ does. Thus, there must exist $s\in \ann\left( x\right) ^{\ast }$ such that $fs\neq 0$ for all $f\in D.$
Thus, as before, there must exist $w\in \ann\left( x-d\right) ^{\ast }$ such
that $fw\neq 0$ for all $f\in D\setminus \left\{ x-d\right\} .$

By choice of $s$ and $w,$ we have $s\left( x-d\right) \neq 0$ and $wx\neq 0.$
Further, $\ann\left( x-d\right) \subseteq \ann\left( s\left( x-d\right)
\right) $ and $\ann\left( x\right) \subseteq \ann\left( wx\right) ,$ so $%
\tilde{D}=D\setminus \left\{ x,x-d\right\} \cup \left\{ s\left( x-d\right)
,wx\right\} $ is a dominating set of $\Gamma \left( R\right) $ with 
$|\tilde{D}|\leq \left\vert D\right\vert .$ With $D$
minimum, we must have $|\tilde{D}|=\left\vert D\right\vert $. Finally, since 
$\left( wx\right) \left( s\left( x-d\right) \right) =0,$ we see that $\tilde{D}$ has more elements with annihilators having nontrivial intersection with $\tilde{D}$ than $D$ does,
contradicting our choice of $D$.

\textbf{Case 2}: Assume that there exists $y\in D$ such that $y\left(
x-d\right) =0.$ There are three possibilities.

\begin{enumerate}
\item $yd=0$

\item $yd\neq 0$ and $y=x$ is the only element of $D$ that satisfies $%
y\left( x-d\right) =0.$

\item $yd\neq 0$ and there exists $y\in D\backslash \left\{ x\right\} $ such
that $y\left( x-d\right) =0.$
\end{enumerate}

For (1), if $yd=0,$ then we have $y\left( x-d\right) =yx=0,$ contradicting
our choice of $x.$

For (2), since $x\left( x-d\right) =0,$ we have $x^{2}=xd\neq 0.$ There
exists $s\in D\setminus \left\{ x\right\} $ such that $sd=0$. By our choice
of $x,$ we have $sx\neq 0.$ Further. we have $\left( sx\right)
x=sx^{2}=sxd=0.$ Now consider $\tilde{D}=\left( D\backslash \left\{
s,x\right\} \right) \cup \left\{ sx,x^{2}\right\} .$ We have shown $\left(
sx\right) x=0$ and $sx^{2}=0.$ Also, $\ann\left( x\right) \subseteq \ann%
\left( x^{2}\right) $ and $\ann\left( s\right) \subseteq \ann\left(
sx\right) .$ Thus, $\tilde{D}$ is a dominating set of $\Gamma \left(
R\right) $ with $|\tilde{D}|\leq |D|$; hence, $|\tilde{D}| = |D|$. However, since $\left( sx\right)
\left( x^{2}\right) =0$, we see that $\tilde{D}$ has more elements with annihilators having nontrivial intersection with $\tilde{D}$ than $D$ does,
contradicting our choice of $D$.

For (3), since $y\left( x-d\right) =0,$ we have $yx=yd\neq 0.$ First, assume 
$yd\in D.$ If $yd=x,$ then $\ann\left( y\right) \subseteq \ann\left(
x\right) $.  So, $\tilde{D}=\left( D\backslash \left\{
y\right\} \right) \cup \left\{ x-d\right\} $ is a dominating set of $\Gamma
\left( R\right) $ with $|\tilde{D}|\leq |D|$; hence, $|\tilde{D} = |D|$. Since $x\left( x-d\right)
=yd\left( y-d\right) =0,$ again $\tilde{D}$ has more elements with annihilators having nontrivial intersection with $\tilde{D}$ than $D$ does, contradicting our choice of $D.$ If $yd\in
D$ and $yd\neq x,$ then $\ann\left( x\right) \subseteq \ann\left( yd\right) .
$ So, $\tilde{D}=\left( D\backslash \left\{ x\right\} \right) \cup \left\{
c\right\} $ is a dominating set of $\Gamma \left( R\right) $ with $|\tilde{D}%
|=|D|$. Since $x\left( x-d\right) =yd\left( y-d\right) =0,$ the set $\tilde{D}$ has more elements with annihilators having nontrivial intersection with $\tilde{D}$ than $D$ does,
contradicting our choice of $D.$

Now assume for the remainder of the proof that $yd\notin D.$ Since $yx=yd,$
we have $\ann\left( x\right) \subseteq \ann\left( yd\right) $ and $\ann%
\left( y\right) \subseteq \ann\left( yd\right) .$ If there exists $w\in \ann%
\left( x\right) ^{\ast }\cap \ann\left( y\right) ^{\ast },$ then $\tilde{D}%
=\left( D\setminus \left\{ x,y\right\} \right) \cup \left\{ yd,w\right\} $
is a dominating set of $\Gamma \left( R\right) $ with $|\tilde{D}|\leq |D|.$
Since $D$ is minimum, we must have $|\tilde{D}|=|D|.$ Since $w\left(
yd\right) =0,$ the set $\tilde{D}$ has more elements with annihilators having nontrivial intersection with $\tilde{D}$ than $D$ does, contradicting our choice of $D.$ Thus, the only
common annihilator of $x$ and $y$ is 0.

We now show that neither $x\left( yd\right) \neq 0$ nor $y\left( yd\right)
\neq 0.$ If $x\left( yd\right) =0,$ then since $y\left( x-d\right) =0$, we
see that $\tilde{D}=\left( D\setminus \left\{ x,y\right\} \right) \cup
\left\{ yd,x-d\right\} $ is a dominating set of $\Gamma \left( R\right) $
with $|\tilde{D}|\leq |D|.$ Since $D$ is minimum, we must have $|\tilde{D}%
|=|D|.$ Since $yd\left( x-d\right) =0,$ the set $\tilde{D}$ has more elements with annihilators having nontrivial intersection with $\tilde{D}$ than $D$ does, contradicting our choice of 
$D.$ If $y\left( yd\right) =0,$ then $\tilde{D}=\left( D \setminus \left\{
x,y\right\} \right) \cup \left\{ yd,c\right\} $ is a dominating set of $%
\Gamma \left( R\right) $ with $|\tilde{D}|\leq |D|.$ Since $D$ is minimum,
we must have $|\tilde{D}|=|D|.$ Since $c\left( yd\right) =c\left( yx\right)
=0,$ the set $\tilde{D}$ has more elements with annihilators having nontrivial intersection with $\tilde{D}$ than $D$ does, again contradicting our choice of $D.$ Therefore, since $D$ is a
dominating set and $yd\notin D$, there exists $s\in D\backslash \left\{
x,y\right\} $ such that $s\left( yd\right) =0.$

If there exists $t\neq 0$ such that $tx=0$ and $st\neq 0,$ then we know  $
\ann\left( s\right) \subseteq \ann\left( st\right)$,  $\ann\left( x\right),
\ann\left( y\right) \subseteq \ann\left( yd\right)$, $\left( st\right) x=0$, $y\left( x-d\right) =0$, and $s\left( yd\right) =0$.  So, the set $\tilde{D}=\left( D\backslash \left\{ x,y,s\right\} \right) \cup \left\{
st,x-d,yd\right\} $ is a dominating set of $\Gamma \left( R\right) $ with $|%
\tilde{D}|\leq |D|.$ Since $D$ is minimum, we must have $|\tilde{D}|=|D|.$
Since $\left( st\right) \left( yd\right) =0=yd\left( x-d\right) ,$ set $\tilde{D}$ has more elements with annihilators having nontrivial intersection with $\tilde{D}$ than $D$ does, contradicting our choice of $D.$ 

Similarly, if there exists $t\neq 0$
such that $ty=0$ and $st\neq 0,$ then we know $\ann\left( s\right) \subseteq %
\ann\left( st\right) ,$ $\ann\left( x\right) ,\ann\left( y\right) \subseteq %
\ann\left( yd\right) ,$ $\left( st\right) y=0,$ $cx=0$. and $s\left(
yd\right) =0.$ So, $\tilde{D}=$ $\left( D\backslash \left\{ x,y,s\right\}
\right) \cup \left\{ st,c,yd\right\} $ is a dominating set of $\Gamma \left(
R\right) $ with $|\tilde{D}|\leq |D|.$ Since $D$ is minimum, we must have $|%
\tilde{D}|=|D|.$ Since $\left( st\right) \left( yd\right) =0=c\left(
yx\right) =c\left( yd\right) ,$ the set $\tilde{D}$ has more elements with annihilators having nontrivial intersection with $\tilde{D}$ than $D$ does, again contradicting our choice of $D.$
Therefore, we have $\ann\left( x\right) ,\ann\left( y\right) \subseteq \ann%
\left( s\right) .$

Since $\ann\left( y\right) \subseteq \ann\left( s\right) ,$ then like
before, if $yz=0$ for some $z\in D\backslash \left\{ y\right\} ,$ then $%
D\backslash \left\{ y\right\} $ is a dominating set of $\Gamma \left(
R\right) ,$ contradicting the minimality of $D.$ If $y^{2}=0,$ then $\ann%
\left( y\right) \subseteq \ann\left( s\right) $ implies $sy=0,$ and we have
an element in $D\backslash \left\{ y\right\} $ that annihilates $y.$ Thus,
we must have $yz\neq 0$ for all $z\in D.$ Finally, let $\tilde{D}=\left(
D\backslash \left\{ x,y\right\} \right) \cup \left\{ c,x-d\right\} .$ Then $%
s\in \tilde{D}$ and $\ann\left( x\right) ,\ann\left( y\right) \subseteq \ann%
\left( s\right) ,$ and since $c\in \ann\left( x\right) ,$ we have $cs=0.$
Also, $c\left( x-d\right) =0.$ Therefore,  $\tilde{D}$ is a dominating set
of $\Gamma \left( R\right) $ with $|\tilde{D}|\leq |D|.$ Since $D$ is
minimum, we must have $|\tilde{D}|=|D|.$ Since $\left( st\right) \left(
yd\right) =0=c\left( yx\right) =c\left( yd\right) ,$ set $\tilde{D}$ has more elements with annihilators having nontrivial intersection with $\tilde{D}$ than $D$ does, contradicting
our choice of $D,$ completing our proof.
\end{proof}

By Proposition \ref{totaldom2}, Proposition \ref{propG}, Theorem \ref{totaldom4}, and the fact that $\gamma_t ( \Gamma (R))=\gamma( \Gamma (R))$ if $\gamma_t ( \Gamma (R))= 1$, we have the following result.

\begin{theorem}
\label{finitetotaldom}
Let $R$ be a commutative ring. If $\gamma_t ( \Gamma (R)) < \infty$ and $R$ is not isomorphic to $\mathbb{Z}_2 \times D$ where $D$ a domain, then $\gamma_t ( \Gamma (R))=\gamma( \Gamma (R))$.
\end{theorem}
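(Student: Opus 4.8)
The plan is to split into cases according to the value of $\gamma_t(\Gamma(R))$, which is a positive integer by hypothesis. Since every total dominating set is a dominating set, we always have $\gamma(\Gamma(R)) \leq \gamma_t(\Gamma(R))$; thus in each case it suffices to exclude the possibility that $\gamma(\Gamma(R))$ is strictly smaller than $\gamma_t(\Gamma(R))$.

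First I would handle $\gamma_t(\Gamma(R)) = 1$: here $1 \leq \gamma(\Gamma(R)) \leq \gamma_t(\Gamma(R)) = 1$, so equality is automatic. Next, if $\gamma_t(\Gamma(R)) = 2$, then since $R$ is not isomorphic to $\mathbb{Z}_2 \times D$ for a domain $D$, Proposition \ref{totaldom2} gives $\gamma(\Gamma(R)) = 2$. If $\gamma_t(\Gamma(R)) = 3$, then Proposition \ref{propG} gives $\gamma(\Gamma(R)) = 3$. Finally, if $4 \leq \gamma_t(\Gamma(R)) < \infty$, then Theorem \ref{totaldom4} applies verbatim and yields $\gamma(\Gamma(R)) = \gamma_t(\Gamma(R))$. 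These four cases exhaust every finite value of $\gamma_t(\Gamma(R))$, so the result follows.

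The only substantive work is already contained in the results being cited — in particular Theorem \ref{totaldom4}, whose proof is the delicate argument that selects a minimum dominating set maximizing the number of its elements having an annihilator inside the set and then repeatedly produces contradictory replacements. The present proof adds nothing beyond the case bookkeeping, so I expect no real obstacle; the one point worth stating explicitly is that the exceptional hypothesis $R \not\cong \mathbb{Z}_2 \times D$ is used only in the case $\gamma_t(\Gamma(R)) = 2$, consistent with the fact (noted earlier) that $\gamma_t(\Gamma(\mathbb{Z}_2 \times D)) = 2$ while $\gamma(\Gamma(\mathbb{Z}_2 \times D)) = 1$, whereas Propositions \ref{propG} and Theorem \ref{totaldom4} hold with no such restriction.
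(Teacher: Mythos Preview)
Your proposal is correct and matches the paper's own argument essentially verbatim: the paper proves this theorem in one sentence by citing Proposition~\ref{totaldom2}, Proposition~\ref{propG}, Theorem~\ref{totaldom4}, and the trivial case $\gamma_t(\Gamma(R))=1$. Your additional remarks about where the hypothesis $R\not\cong\mathbb{Z}_2\times D$ is actually used are accurate and add clarity, but the underlying proof is identical.
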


Since any finite graph will have a finite total domination number, the next result follows from Theorem \ref{finitetotaldom}.

\begin{corollary}
\label{finitering}
Let $R$ be a finite commutative ring. If $R$ is not isomorphic to $\mathbb{Z}_2 \times D$ where $D$ a domain, then $\gamma_t ( \Gamma (R))=\gamma( \Gamma (R))$.
\end{corollary}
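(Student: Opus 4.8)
The plan is to derive this immediately from Theorem~\ref{finitetotaldom}. Since $R$ is finite, $Z(R)^{\ast}$ is a finite set, hence $\Gamma(R)$ is a finite graph; the only thing that needs checking before invoking Theorem~\ref{finitetotaldom} is that $\gamma_t(\Gamma(R))$ is finite, since that hypothesis together with $R \not\cong \mathbb{Z}_2 \times D$ is precisely what the theorem requires.

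To show $\gamma_t(\Gamma(R)) < \infty$, I would prove that the full vertex set $V(\Gamma(R)) = Z(R)^{\ast}$ is a total dominating set, which holds as soon as every vertex of $\Gamma(R)$ has a neighbor (allowing self-adjacency, as in Section~\ref{def}). If $|Z(R)^{\ast}| \geq 2$, this follows because $\Gamma(R)$ is connected by \cite[Theorem~2.3]{AL1} and a connected graph on at least two vertices has no isolated vertex. If $|Z(R)^{\ast}| = 1$, say $Z(R)^{\ast} = \{z\}$, then $z$ is a zero-divisor, so there is a nonzero $w$ with $zw = 0$; such a $w$ lies in $Z(R)^{\ast} = \{z\}$, forcing $w = z$ and hence $z^2 = 0$, so $z$ is self-adjacent. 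If $Z(R)^{\ast} = \emptyset$ then $R$ is a (finite) domain, $\Gamma(R)$ has no vertices, and $\gamma_t(\Gamma(R)) = 0$. In every case $\gamma_t(\Gamma(R)) \leq |Z(R)^{\ast}| < \infty$, and the claim follows from Theorem~\ref{finitetotaldom}.

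I do not expect a genuine obstacle here; the only delicate point is the bookkeeping for graphs with zero or one vertices, where the self-adjacency convention of Section~\ref{def} is exactly what prevents $\gamma_t$ from being infinite. Once finiteness of $\gamma_t(\Gamma(R))$ is secured, the conclusion is immediate.
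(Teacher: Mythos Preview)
Your proposal is correct and follows exactly the paper's approach: the paper simply observes that any finite graph has a finite total domination number and then invokes Theorem~\ref{finitetotaldom}. You supply additional justification for why $\gamma_t(\Gamma(R))<\infty$ (handling the cases $|Z(R)^\ast|\ge 2$, $=1$, and $=0$ separately), which the paper leaves implicit, but the underlying argument is the same.
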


\section{Main Result}
\label{mainresult}
It remains to consider if domination number and total domination number are equal for zero-divisor graphs whose total domination number is infinite. 

\begin{lemma}
\label{infinite}
Let $R$ be a commutative ring. If $\gamma_t ( \Gamma (R))$ is infinite, then $\gamma_t ( \Gamma (R))=\gamma( \Gamma (R))$.
\end{lemma}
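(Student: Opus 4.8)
The plan is to prove the contrapositive-flavored statement directly: assuming $\gamma_t(\Gamma(R)) = \infty$, show that $\gamma(\Gamma(R))$ is also infinite. Since we always have $\gamma(\Gamma(R)) \le \gamma_t(\Gamma(R))$, the content is the reverse inequality, and when the right side is $\infty$ this means: no finite dominating set exists. So suppose for contradiction that $\Gamma(R)$ has a finite dominating set $D = \{a_1, \ldots, a_n\}$, and we will manufacture a finite total dominating set, contradicting $\gamma_t(\Gamma(R)) = \infty$. The key structural observation is the one already exploited in Proposition \ref{propG} and Theorem \ref{totaldom4}: one can "repair" a dominating set into a total dominating set by replacing elements that fail to be dominated by something in $D$ with suitable products, and such products retain the annihilator containments needed to preserve domination.

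The main steps I would carry out are as follows. First, let $B \subseteq D$ be the set of "bad" vertices $a_i$ such that $a_i a_j \ne 0$ for every $a_j \in D$ and $a_i^2 \ne 0$ — i.e., the vertices of $D$ that $D$ itself fails to dominate in the total sense. If $B = \emptyset$, then $D$ is already a total dominating set and we are done (contradiction). Otherwise, for each $a_i \in B$, since $\gamma_t(\Gamma(R)) > n$ is impossible to witness only through $D$, consider instead: $a_i$ is still a zero-divisor, so $\ann(a_i)^* \ne \emptyset$; pick $c_i \in \ann(a_i)^*$. If for every $a_i \in B$ there is a choice $c_i$ with $c_i \in Z(R)^*$ that is dominated by $D$ (which it is, being a non-$D$ vertex, unless $c_i \in D$), then the enlarged set $D \cup \{c_i : a_i \in B\}$ is finite; I would then argue it is a total dominating set, or can be further repaired into one, by checking that every vertex — old and new — now has a neighbor in the set. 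The delicate point, exactly mirroring the case analysis in Theorem \ref{totaldom4}, is that adding the $c_i$ may not immediately total-dominate the $c_i$ themselves or the remaining elements of $D$; here one uses the product trick: replace a pair $x, x'$ of still-undominated elements by $sx, wx'$ (or $x x'$ when $xx' \ne 0$) where $s \in \ann(x)^*$, $w \in \ann(x')^*$, using $\ann(x) \subseteq \ann(sx)$ to keep domination intact while creating the edge $(sx)(wx') = 0$ or $(xx')$-adjacencies. Since $D$ is finite, this repair terminates and produces a finite total dominating set.

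The main obstacle I expect is making the "repair terminates" argument clean without reproducing the full combinatorial case analysis of Theorem \ref{totaldom4}. The cleanest route is probably not to re-run that analysis but to invoke the already-proven results structurally: by Theorem \ref{finitetotaldom}, if $\gamma_t(\Gamma(R))$ were finite and $R \not\cong \mathbb{Z}_2 \times D$, equality holds; and $\mathbb{Z}_2 \times D$ has $\gamma_t = 2 < \infty$, so it is irrelevant here. Thus the real task is only to show $\gamma_t(\Gamma(R)) < \infty$ whenever $\gamma(\Gamma(R)) < \infty$ — i.e., a finite dominating set yields a finite total dominating set — and then Theorem \ref{finitetotaldom} (or its pieces) finishes the job, since $\gamma_t$ finite forces $\gamma_t = \gamma$, contradicting $\gamma_t = \infty$. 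So the heart of the proof is the single implication "$\gamma(\Gamma(R)) < \infty \implies \gamma_t(\Gamma(R)) < \infty$," which I would establish by the bad-vertex enlargement above: $D \cup \{c_i : a_i \in B\}$ together with, for each newly added or still-undominated vertex, one further annihilator element, gives a finite superset of $D$ that is a total dominating set; finiteness is automatic since $|B| \le n$ and each repair step adds boundedly many elements. Verifying that this finite set is genuinely totally dominating — every vertex of $\Gamma(R)$, including all the added ones, has a neighbor inside — is the routine-but-careful bookkeeping step, and it is where I would spend most of the write-up, guided by the containments $\ann(v) \subseteq \ann(vv')$ that make all the substitutions legitimate.
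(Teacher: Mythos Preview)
Your approach is essentially the paper's: show the contrapositive that a finite dominating set yields a finite total dominating set. However, you have badly overcomplicated the construction. Once you pick, for each $a_i \in B$, an element $c_i \in \ann(a_i)^*$, the set $D \cup \{c_i : a_i \in B\}$ is \emph{already} a total dominating set --- no product tricks, no repair, no invocation of Theorem~\ref{totaldom4} are needed. Indeed, every vertex outside $D$ is dominated by $D$; every $a_j \in D \setminus B$ is by definition of $B$ dominated by some element of $D$; every $a_i \in B$ is dominated by $c_i$; and every $c_i$ is dominated by $a_i \in D$. The paper's proof does exactly this (and in fact does not even bother isolating the ``bad'' set $B$: it simply adjoins one neighbor for every element of $D$).

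There is one point the paper addresses that your proposal omits. You interpret ``$\gamma_t(\Gamma(R)) = \gamma(\Gamma(R))$'' as the single assertion ``both are infinite,'' but the paper reads it as equality of cardinals and therefore also rules out the possibility that $\gamma(\Gamma(R))$ is countably infinite while $\gamma_t(\Gamma(R))$ is uncountable. The same enlargement argument handles this: if $D$ is a dominating set of cardinality $\kappa$, adjoining one neighbor per element gives a total dominating set of cardinality at most $\kappa$, so $\gamma_t \le \gamma$. Your write-up should include this, since otherwise the lemma as stated (equality, not merely ``both infinite'') is not fully proved.
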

\begin{proof}
If the total domination number of a graph is infinite, then the domination number of the graph is infinite. To see this, suppose that $D = \{ x_1 , x_2 , \dots , x_n \}$ is a minimum dominating set of $\Gamma (R) $. One way to create a total dominating set is to add an annihilator of $x_i$ to the set for $1 \leq i \leq n$, which shows that total dominating number is also finite. The initial statement of the proof now follows via the contrapositive.

Thus, the only case we need to consider is when $\gamma( \Gamma (R))$ is countable, but $\gamma_t( \Gamma (R))$ is uncountable. For sake of contradiction, assume $\Gamma(R)$ is such a graph. Let $D$ be a minimum dominating set of $\Gamma(R)$. Then $D$ is countable. Then for each element of $D$, pick one of its neighbors and put it in a set $N$. Since $D$ is countable, $N$ is countable. Consider $T = D \cup N$. Then $T$ is a total dominating set. Since $T$ is the union of two countable sets, $T$ is countable. Thus, since $T$ is a countable total dominating set, the total domination number must be countable, which is a contradiction. 
\end{proof}

By Theorem \ref{finitetotaldom} and Lemma \ref{infinite}, we now have the following.

\begin{customthm}{\ref{mainthm}}
Let $R$ be a commutative ring. If $R$ is not isomorphic to $\mathbb{Z}_2 \times D$ where $D$ a domain, then $\gamma_t ( \Gamma (R))=\gamma( \Gamma (R))$.
\end{customthm}

\end{document}